\newcommand{\RR}{{\mathbb{R}}}
\newcommand{\CC}{{\mathbb{C}}}
\newcommand{\cJ}{{\cal{J}}}
\newcommand{\cL}{{\cal{L}}}
\newcommand{\cM}{{\cal{M}}}
\newcommand{\cO}{{\cal{O}}}
\newcommand{\cP}{{\cal{P}}}
\newcommand{\Ss}{{\cal{S}}}
\newcommand{\cS}{{\cal{S}}}
\newcommand{\Tt}{{\cal{T}}}
\newcommand{\cT}{{\cal{T}}}
\newcommand{\Uu}{{\cal{U}}}
\newcommand{\cU}{{\cal{U}}}
\newcommand{\ILa}{\Tt_\Lambda}
\newcommand{\optpar} [2] []
  {\ensuremath{#2\ifempty{#1} {} {\!\left(#1\right)}}}
\newcommand{\optsub} [2] []
  {\ensuremath{#2\ifempty{#1} {} {_{#1}}}}
\newcommand{\optsuper} [2] []
  {\ensuremath{#2\ifempty{#1} {} {^{#1}}}}
\newlength{\boxwd}
\newlength{\boxht}
\newcommand{\ifempty} [1] {\ifthenelse{\equal {#1} {} }}
\newcommand{\UpLa}{\Uu^\pi_\Lambda}
\newtheorem{theorem}{Theorem}[section]
\newtheorem{corollary}{Corollary}
\newtheorem{lemma}[theorem]{Lemma}
\newtheorem{proposition}{Proposition}
\theoremstyle{definition}
\newcommand\capsize{\relax}
\newcommand\diag{\operatorname{diag}}
\newcommand\tr{\operatorname{tr}}
\title{The Toda lattice, old and new}
\author{Carlos Tomei \vspace{0.5cm} \\
Departamento de Matemática, PUC-Rio\footnote{ R. Mq.  S. Vicente 225,
Rio de Janeiro 22451-900,  Brazil. email: tomei@mat.puc-rio.br. The author gratefully acknowledges support from CAPES, CNPq and FAPERJ, Brazil.} \vspace{0.5cm} \\
}
\date{}
\begin{document}
\maketitle

\begin{abstract} Originally a model for wave propagation on the line, the Toda lattice is a wonderful case study in mechanics and symplectic geometry. In Flaschka's variables, it becomes an evolution given by a Lax pair on the vector space of real, symmetric, tridiagonal matrices. Its very special asymptotic behavior was studied by Moser by introducing norming constants, which play the role of discrete inverse variables in analogy to the solution by inverse scattering of KdV. It is a completely integrable system on the coadjoint orbit of the upper triangular group. Recently,  bidiagonal coordinates, which parameterize also non-Jacobi tridiagonal matrices, were used to reduce asymptotic questions to local theory. Larger phase spaces for the Toda lattice lead to the study of isospectral manifolds and different coadjoint orbits. Additionally, the time one map of the associated flow is computed by a familiar algorithm in numerical linear algebra.

The text is mostly expositive and self contained, presenting alternative formulations of familiar results and applications to numerical analysis.
\end{abstract}

\medbreak
{\noindent\bf Keywords:} {Toda lattice, completely integrable systems, QR algorithm, isospectral manifolds, inverse scattering.}

\medbreak
{\noindent\bf MSC-class:}{Primary: 65F15, 37S35; Secondary: 53D05.}

\section{Introduction}
The Toda lattice is a beautiful mathematical object: mathematical miracles and serendipity are abundant. Somehow, everybody has something to say about it, and this extends way beyond mathematicians: the connections with physics and numerical analysis are fruitful and clarifying. Moreover, the formalism is  versatile, accomodating a number of interesting dynamical systems.

In this text, we present some of these many aspects of the Toda lattice. After more than thirty years of intriguing discoveries, the subject is not exhausted even in its most elementary formulation. We aim at concrete examples, and provide occasional pointers to more abstract approaches.

In Section 2, the differential equation is presented as the physical system introduced by Toda and converted into an evolution on Jacobi matrices by Flaschka's remarkable change of variables. The two basic properties are discussed: the conservation of eigenvalues and the very simple asymptotic behavior. A comparison with the archetypical Lax pair, the Korteweg-de Vries equation, motivates the introduction of Moser's inverse variables on Jacobi matrices, given by eigenvalues and (discrete) norming constants.

The change of variables taking the original mechanical system in $\RR^{2n}$ to the set of Jacobi matrices with a fixed trace $\cJ_0$ leads to a brief description of the Hamiltonian formalism. After a brief review of completely integrable systems and coadjoint orbits as symplectic spaces, we interpret the Toda equation as a Hamiltonian system on coadjoint orbits (in particular $\cJ_0$), as done originally by Adler and Kostant. We then consider larger phase spaces: generic coadjoint orbits and isospectral manifolds of tridiagonal matrices.

Section 4 is another geometric approach to the Toda lattice, which might have anticipated its discovery by decades. It leads very naturally to the solutions by factorization by Symes. Section 5 is dedicated to an interpolation theorem which connects the subject to numerical analysis: one Hamiltonian in the completely integrable collection associated to the Toda equation gives rise to a flow which, at integer times, consists of matrices obtained by the so called $QR$ algorithm, extensively used in algorithms to compute eigenvalues.

Norming constants have a drawback: they do not extend to the boundary of the set of Jacobi matrices, which is where the fine asymptotic properties of the flows occur. Section 6 describes bidiagonal coordinates, which provide charts for the full isospectral manifold of tridiagonal matrices. A brief interlude on $QR$ algorithms with shifts is provided as an example of the versatility of this new instrument.

As shown above, the content of this review is biased. Indeed, a short text cannot provide coverage for all the ramifications of the subject. As a minimal list of alternatives, the curious reader is invited to consider \cite{Pe} and \cite{RSTS} for algebraic aspects related to integrability, \cite{LT} for uses of symplectic geometry to the analysis of (variations of) the original Toda system, \cite{KS} for an essentially orthogonal overview and \cite{R} for a relativistic mutation, one of many interesting physical directions from our starting point.

\section{Physical origins and Flaschka's variables}

In the mid sixties, the Japanese physicist M. Toda \cite{Tod} proposed  a model for wave propagation along $n$ particles in a line by the Hamiltonian
\[ H(x,y) = \frac{1}{2} \sum_1^n y_k^2 + \sum_1^{n-1} \exp(x_k - x_{k+1}). \]
As usual, positions $x_k$  and velocities $y_k$ vary as
\[\dot{x}_k = \frac{\partial H}{\partial y_k} = y_k, \quad \dot{y}_k = - \frac{\partial H}{\partial x_k}= \exp(x_{k-1}-x_k) - \exp(x_k - x_{k+1}),\ i=1,\ldots,n.\]
Here, $x_0 = -\infty, x_{n+1} = +\infty$ --- this is the \textit{non-periodic} case of the {\it Toda lattice}. The particles are labeled and from a mathematical viewpoint, there is nothing wrong with occasional collisions: they simply pass each other.

Clearly, the energy $H(x(t), y(t))$ is a conserved quantity. The invariance under translation in configuration space (i.e., the fact that $H(x,y) = H(x + c, y)$, for any fixed $c \in \RR^n$) implies that the center of mass of the system moves uniformly (i.e., the linear momentum of the system is preserved).

Following Flaschka\cite{Fl},
shift to center of mass coordinates (i.e., work with differences $x_k - x_{k+1}$), get rid of exponentials and adjust constants,
$$a_k = -\frac{y_k}{2} \, , \quad k=1,\ldots,n,\quad
b_k = \frac{1}{2}\  e^{(x_k - x_{k+1})/2}, \quad k=1,\ldots,n-1$$
and the evolutions for $x(t)$ and $y(t)$ become
\[ a_k ' = 2 (\, b_k^2 - b_{k-1}^2 \,), \, k=1,\ldots,n, \quad b_k'= b_k(\, a_{k+1} - a_k\, ), \, k=1,\ldots,n-1\]
which, surprisingly, can be cast as the matrix differential equation
$$\dot{J} = [J,\Pi_{sk} J] = J (\Pi_{sk} J) - (\Pi_{sk} J) J. \eqno(J)$$
Here, $J$ is a Jacobi matrix with diagonal entries $a_k$ and principal off-diagonal entries $b_k$. A {\it Jacobi matrix} $J$ is a real $n \times n $ symmetric matrix which is tridiagonal (i.e., $J_{ij}=0$ if $|i-j|>1$) and such that the principal off-diagonal entries (those for which $|i-j|=1$) is a positive number.
The matrix $\Pi_{sk} M$ is skew-symmetric, with the same lower triangular part as $M$.

There is nothing wrong in considering equation $(J)$ for arbitrary real symmetric matrices (and even non-symmetric matrices). Explicitely, consider the differential equation on real symmetric matrices $S(t)$ given by
\[ S'(t) = [ S(t) , \Pi_{sk} S(t) ], \quad S(0) = S_0. \eqno(S) \]

\subsection{Lax pairs, asymptotic behavior}

We present two fundamental facts. The first one is a question about differential equations. What kind of differential equation on matrices gives rise to an evolution $M(t)$ which preserves the eigenvalues? More precisely, what kind of vector field gives rise to a flow of the form $M(t) = ( P(t) )^{-1} M(0) P(t)$? Simply take derivatives,
\[ M'(t) = - ( P(t) )^{-1} P'(t) ( P(t) )^{-1} M(0) P(t) + (P(t) )^{-1} M(0) P'(t) \]
\[= - ( P(t) )^{-1} P'(t) M(t) + M(t) ( P(t) )^{-1} P'(t) =
[ M(t), ( P(t) )^{-1} P'(t)].\]

In words, equations given by {\it Lax pairs} $M'(t) = [M(t), X(t)]$ are solved by conjugating the initial condition. In particular, if the initial condition is a real, symmetric matrix $S(0)$, and we require $P$ to be an orthogonal matrix (so that symmetry, together with spectrum, is preserved along the orbit $S(t)$), one should consider a Lax pair of the form
$S'(t) = [ S(t), A(t)]$, where $A(t)$ is a real, skew symmetric matrix (this follows from the standard computation --- take the derivative at $0$ of a curve of orthogonal matrices $Q(t)$ with $Q(0)=I$).

\begin{proposition} \label{spectrum}
The solution $S(t)$ of $(S)$ starting from a real, symmetric matrix $S(0)$ is of the form $S(t) = Q(t)^\ast S(0) Q(t)$: it is well defined for all $t \in \RR$.
\end{proposition}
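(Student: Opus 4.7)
The plan is to establish local existence and uniqueness by standard ODE theory, then extract an auxiliary orthogonal curve $Q(t)$ that conjugates $S(0)$ to $S(t)$; the resulting isometry gives an a priori bound on $S(t)$ and hence global existence. First I would observe that the right hand side of $(S)$, viewed as a map $S \mapsto [S, \Pi_{sk} S]$ on all of $\RR^{n \times n}$, is a polynomial (hence locally Lipschitz), so Picard--Lindel\"of produces a unique maximal solution $S(t)$ on some open interval $I \ni 0$. Symmetry is preserved along the flow: if $S = S^\ast$ then $A := \Pi_{sk} S$ is skew-symmetric, and a direct computation gives $[S,A]^\ast = A^\ast S^\ast - S^\ast A^\ast = -AS + SA = [S,A]$, so the symmetric subspace is invariant.

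Next, given this symmetric $S(t)$, I would consider the time-dependent \emph{linear} ODE $Q'(t) = Q(t)\,\Pi_{sk}S(t)$ with $Q(0) = I$, which has a solution on all of $I$ because $\Pi_{sk}S(t)$ is continuous there. Differentiating $Q^\ast Q$ yields
\[ (Q^\ast Q)' = (\Pi_{sk}S)^\ast Q^\ast Q + Q^\ast Q\,(\Pi_{sk}S) = -(\Pi_{sk}S)\,Q^\ast Q + Q^\ast Q\,(\Pi_{sk}S), \]
which vanishes whenever $Q^\ast Q = I$; uniqueness for the linear ODE satisfied by $Q^\ast Q$ then forces $Q(t)^\ast Q(t) \equiv I$, i.e.\ $Q(t) \in O(n)$. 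Define $\tilde S(t) := Q(t)^\ast S(0) Q(t)$; the computation that opens the subsection shows $\tilde S'(t) = [\tilde S(t), \Pi_{sk}S(t)]$. Both $S$ and $\tilde S$ satisfy the \emph{same} linear (in $X$) ODE $X' = [X, \Pi_{sk}S(t)]$ with the same initial data $X(0) = S(0)$, so by uniqueness $S(t) = Q(t)^\ast S(0) Q(t)$ throughout $I$.

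Finally, since $Q(t)$ is orthogonal, any orthogonally invariant norm of $S(t)$ agrees with that of $S(0)$; in particular $\|S(t)\|$ is bounded on $I$, so $S(t)$ remains in a compact set. The standard extension theorem for ODEs then rules out a finite endpoint of the maximal interval, giving $I = \RR$. The main subtlety is precisely this global existence step: equation $(S)$ is quadratic, so local theory alone does not preclude blow-up, and the construction of the orthogonal conjugator is what supplies the missing a priori estimate.
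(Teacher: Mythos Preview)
Your argument is correct and follows the same skeleton as the paper's: the Lax pair structure gives the orthogonal conjugation form, and conservation of the Frobenius norm $\|S\|^2 = \tr S^\ast S$ then yields the a priori bound needed for global existence. The paper's proof is a single sentence invoking that norm conservation, having tacitly absorbed the conjugation claim into the Lax pair discussion preceding the proposition and deferred an explicit formula for $Q(t)$ (via the $QR$ factorization of $e^{tS(0)}$) to Section~\ref{factorizations}; you instead construct $Q(t)$ on the spot as the solution of the auxiliary linear equation $Q' = Q\,\Pi_{sk}S(t)$ and verify orthogonality directly, which makes the proposition self-contained at this point in the exposition.
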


An explicit form of $Q(t)$ is given in Section \ref{factorizations}.

\begin{proof} We are left with showing global existence: the matrix norm $ ||S||^2 = \tr S^T S$ is conserved along an orbit.
\end{proof}

We now consider the asymptotic behavior for $S(t)$ and $J(t)$, the orbits starting from arbitrary symmetric and Jacobi matrices. In the Jacobi case, we may get information from the physical interpretation: informally, minimizing the potential energy (the second term in the Hamiltonian) leads to the spreading of the particles, so that one might expect $x_k - x_{k+1} \to - \infty$. Thus, for $t \to \infty$, particles should be essentially independent from each other and undertake uniform motion. In Flaschka's variables, the orbit $J(t)$ should converge to a diagonal matrix.

\begin{proposition} \label{asymptotics}
The limits for $t \to \pm \infty$ of $S(t)$ are diagonal matrices. When $t \to \infty$ (resp. $t \to - \infty$), $J(t)$ converges to a diagonal matrix with strictly decreasing (resp. increasing) entries along the diagonal.
\end{proposition}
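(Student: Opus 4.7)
The plan is to combine a Lyapunov-function argument for the flow $(S)$ with a direct ODE analysis of the off-diagonal entries in the Jacobi case. Take $N=\diag(1,2,\ldots,n)$ and $F(S)=\tr(NS)$. Using the cyclic property of the trace, the identity $\tr(N[A,B])=\tr([N,A]B)$, and the fact that for symmetric $S$ one has $[N,S]_{ij}=(i-j)S_{ij}$ and $(\Pi_{sk}S)_{ij}=\sgn(i-j)S_{ij}$, a direct computation from $(S)$ gives
\[\dot F \;=\; \tr\bigl([N,S]\,\Pi_{sk}S\bigr) \;=\; -2\sum_{i<j}(j-i)\,S_{ij}^{\,2} \;\le\; 0,\]
with equality exactly on diagonal matrices. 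Proposition~\ref{spectrum} confines $S(t)$ to the compact isospectral $O(n)$-orbit of $S(0)$, so $F$ is bounded and $\int_0^\infty(-\dot F)\,dt<\infty$; since $\dot S$ is bounded on this compact set, a uniform-continuity (Barbalat) argument forces $S_{ij}(t)\to 0$ for every $i\ne j$.

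Second, I would upgrade this to convergence of the full orbit. The forward orbit lies in a compact invariant set, so its $\omega$-limit $\Omega$ is non-empty and connected. By the first step every point of $\Omega$ is diagonal, and any diagonal $D$ satisfies $\Pi_{sk}D=0$ and is therefore an equilibrium of $(S)$. The diagonal matrices whose spectrum equals $\sigma(S(0))$ form a finite set (at most $n!$ elements), and a connected subset of a finite set is a single point, so $\Omega=\{D_+\}$. The statement for $t\to-\infty$ follows by time reversal.

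Third, for the sorting in the Jacobi case, write $J(t)\to\diag(\mu_1,\ldots,\mu_n)$. Integrating $\dot b_k=b_k(a_{k+1}-a_k)$ yields
\[b_k(t)=b_k(0)\exp\!\left(\int_0^t\bigl(a_{k+1}(s)-a_k(s)\bigr)\,ds\right),\]
with $b_k(0)>0$. Since $b_k(t)\to 0$, the exponent must tend to $-\infty$, which forces $\mu_{k+1}-\mu_k\le 0$. Jacobi matrices have simple spectrum, so the $\mu_k$'s are distinct, and the inequality is strict: $\mu_1>\mu_2>\cdots>\mu_n$. Reversing time flips the sign in the exponent and delivers the increasing order as $t\to-\infty$.

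The main obstacle I expect is the second step: upgrading square-integrability of the off-diagonals to convergence of the whole orbit. The argument above bypasses delicate gradient-flow machinery only because diagonal matrices with a fixed spectrum form a finite set, a peculiar feature of this problem; on a phase space with a continuum of equilibria one would need a much finer analysis.
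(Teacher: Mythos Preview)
Your proof is correct and close in spirit to the paper's, but you take a genuinely different route to full convergence. The paper uses not one monotone quantity but $n-1$ of them: each partial trace $\sum_{j\le k}S_{jj}$ is nondecreasing (your $F=\tr(NS)$ is, up to the constant $(n+1)\tr S$, minus their sum). Because each partial trace is bounded and monotone it converges, so each diagonal entry $S_{kk}$ converges directly as a difference of consecutive partial traces; the integrability and Lipschitzness of the partial-trace derivatives then force the off-diagonals to zero --- the same Barbalat step you use. Your packaging, a single Lyapunov function followed by a topological $\omega$-limit argument exploiting the finiteness of the diagonal isospectral set, is a valid alternative; it trades the extra monotone quantities for a connectedness argument, and is cleaner if one does not want to track $n-1$ functions. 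The sorting argument in the Jacobi case is the same in both proofs, with your integral formula for $b_k$ making explicit the step the paper leaves implicit.
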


\begin{proof} The differential equations for the diagonal entries of $S(t)$ are given by
\[ S_{11}' = 2( S_{12}^2 + S_{13}^2 + \ldots + S_{1n}^2), \]
\[ S_{22}' = 2( -S_{21}^2 + S_{23}^2 + \ldots + S_{2n}^2), \]
and, in general, $S_{kk}' = - \sum_{j<k} S_{kj}^2 + \sum_{j>k} S_{kj}^2$. The equation for $S_{11}'$ indicates that it is nondecreasing in time. The same is true not for $S_{22}'$ but for $S_{11}' + S_{22}'$, and in general the partial traces $\sum_{j=1}^k S_{jj}$ are nondecreasing. Since the norm of $S(t)$ is constant, all entries $S_{ij}$ and the partial traces (and their derivatives) are uniformly bounded. In particular, the derivatives of the partial traces are integrable, Lipschitz functions on $\RR$ and hence all entries $S_{ij}, i \ne j$ must go to zero. This implies the  diagonal convergence of $S(t)$ (and hence of $J(t)$) at $\pm \infty$.

From equation $b_k'= b_k(\, a_{k+1} - a_k\, )$, the  entries $b_k(t)$ of $J(t)$ are always positive (indeed, if $b_k=0$ at some time, it is always zero). When $t \to \infty$, we must then have $a_{k+1} - a_k <0$, so that the diagonal limit matrix has entries in strictly decreasing order (the fact that the eigenvalues of a Jacobi matrix are all distinct is proved in Lemma \ref{simplicity}). A similar argument obtains the result for $t \to -\infty$.
\end{proof}

As a side remark, notice that the above proposition implies the spectral theorem (only in finite dimensions: the presence of continuous
spectrum brings up new issues to the asymptotics of the Toda flow \cite{DLT3}). The interested reader might enjoy a proof  of the {\it Wielandt-Hoffman} theorem along similar lines (\cite{DRTW}).

\subsection{Scattering and inverse variables} \label{LaxMoser}

Flaschka was certainly inspired by P. Lax's
casting of the Korteweg-de Vries (KdV) equation into \textit{Lax pair} form.
Briefly, the evolution
\[ u_t(t,x) = 6 \ u(t,x) \ u_x(t,x) - u_{xxx}(t,x), \qquad t \ge 0, \ x \in \RR \]
is equivalent to the operator evolution
\[ L'(t) = [L(t), H(t)] \ , \]
where
\[L(t) f(x) = - f''(x) + u(t,x) f(x), \]
\[H(t) f(x) = 4 f'''(x) - 6 u(t,x) f'(x) - 3 u_x(t,x) f(x). \]

Lax then reproved the seminal discovery of Gardner, Greene, Kruskal and Miura (\cite{GGKM}) that the the evolution of KdV varies the operator $L$ preserving its eigenvalues.
Analogous computations replacing $L$ by higher order operators lead to the \emph{ Gelfand-Dickey flows},  which include the Boussinesq equation and more (\cite{BDT}).

The connection between differential equations like KdV and the {\it inverse scattering method} discovered in the late sixties (\cite{GGKM}, \cite{AKNS}) led to intense research on integrable systems. Originally as a formalism, inverse data provided (infinite dimensional) action-angle variables for the KdV equation.

Inverse variables for Jacobi matrices were pervasive in the early approaches to the Toda flows, starting with Moser (\cite{M}). The theorem below, that he attributed to Stieltjes,  provides inverse variables for the Toda equation $(J)$ (\cite{PT}, \cite{KP}).

\begin{lemma} \label{simplicity}
Jacobi matrices always have distinct real eigenvalues. The first and last coordinate of each eigenvector is a nonzero number.
\end{lemma}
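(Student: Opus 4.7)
The key structural fact is that the eigenvalue equation $Jv=\lambda v$ for a Jacobi matrix is a three-term recurrence with invertible coefficients. Writing out $(Jv)_k = \lambda v_k$ gives
\[ b_1 v_2 = (\lambda - a_1) v_1, \qquad b_k v_{k+1} = (\lambda - a_k) v_k - b_{k-1} v_{k-1} \ \text{for}\ 2 \le k \le n-1, \]
together with the boundary relation $b_{n-1} v_{n-1} + a_n v_n = \lambda v_n$. Since every off-diagonal entry $b_k$ is \emph{strictly positive}, the first equation determines $v_2$ from $v_1$, and the general recurrence determines $v_{k+1}$ from $v_k$ and $v_{k-1}$. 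Thus the entire eigenvector is determined by its first coordinate $v_1$.

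From this observation the lemma falls out in three short steps. First, real symmetry of $J$ yields real eigenvalues and a basis of eigenvectors (spectral theorem). Second, the recurrence shows that the eigenspace of any $\lambda$ has dimension at most one, so every eigenvalue is simple; combined with the existence of $n$ eigenvalues counted with multiplicity, they must all be distinct. Third, if $v_1 = 0$ for some eigenvector, the recurrence propagates this zero: $b_1 v_2 = 0$ forces $v_2 = 0$, then $b_2 v_3 = 0$ forces $v_3 = 0$, and so on, giving $v = 0$, a contradiction. For the last coordinate, one runs the same recurrence starting from the bottom row $b_{n-1} v_{n-1} = (\lambda - a_n) v_n$ and climbing upward, or equivalently invokes the symmetric argument applied to the reversed Jacobi matrix $P J P$, where $P$ is the permutation reversing the index order (which is again Jacobi).

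There is no serious obstacle: the only ingredient beyond the spectral theorem is the positivity $b_k > 0$, which makes the tridiagonal recurrence a genuine two-term linear system with nonzero coefficients. I would present the proof in the order above: set up the recurrence, deduce that each eigenspace is one-dimensional (hence simplicity), and finally use the recurrence in each direction to rule out $v_1 = 0$ and $v_n = 0$.
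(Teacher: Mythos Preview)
Your proof is correct and follows essentially the same approach as the paper: both use the three-term recurrence with nonzero off-diagonal entries $b_k$ to show that an eigenvector is determined by its first coordinate, from which simplicity and nonvanishing of the extreme entries follow. The only cosmetic difference is order---the paper first proves $v_1\ne 0$ and then deduces simplicity (two independent eigenvectors for the same $\lambda$ would combine to one with $v_1=0$), whereas you first note the eigenspace is one-dimensional and then rule out $v_1=0$ and $v_n=0$ separately.
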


\begin{proof} Suppose $J v = \lambda v$ and $v[1]=0$: equating the first coordinates in the equation obtains $v[2]=0$, and successively --- thus, the extreme entries of eigenvectors are nonzero. Now, two independent eigenvectors associated to the same eigenvalue  would yield an eigenvector with first coordinate equal to zero.
\end{proof}

\begin{theorem} \label{normingconstants}
The set of Jacobi matrices is diffeomorphic to the product
\[ \{ (\lambda_1, \lambda_2,\ldots,\lambda_n) \, ; \lambda_1 < \lambda_2 <\ldots < \lambda_n \} \times \{(v_1, v_2, \ldots, v_n) \, ; v_i >0, \  \sum_i v_i^2 = 1 \}. \]
More precisely, the diffeomorphism takes $J$ to its ordered eigenvalues and to the first coordinates of its associated eigenvectors, normalized so as to be positive.
\end{theorem}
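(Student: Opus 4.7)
The dimension check is immediate: the Jacobi matrices form a $(2n-1)$-dimensional manifold, while the target product is $n + (n-1) = 2n-1$ dimensional, so the statement is at least plausible.

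My plan is to construct the map and its inverse explicitly, and to verify smoothness on both sides.

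\textbf{Well-definedness of the forward map.} Given $J$, Lemma \ref{simplicity} gives $n$ distinct real eigenvalues $\lambda_1<\cdots<\lambda_n$ and orthonormal eigenvectors $u_i$ whose first coordinates are nonzero; multiplying each $u_i$ by $\pm 1$ I may assume $v_i:=u_i[1]>0$. Expanding $e_1=\sum_i v_i u_i$ in the orthonormal eigenbasis gives $\sum_i v_i^2 = \|e_1\|^2 = 1$. Smoothness of $J\mapsto(\lambda,v)$ follows from smooth dependence of simple eigenvalues and their spectral projectors on a symmetric matrix, combined with the positive-sign normalization (which is smooth wherever $u_i[1]\neq 0$, as guaranteed by Lemma \ref{simplicity}).

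\textbf{Construction of the inverse.} Given $(\lambda,v)$ in the target, form the atomic probability measure $\mu=\sum_{i=1}^n v_i^2\,\delta_{\lambda_i}$. Since the $\lambda_i$ are distinct and all $v_i>0$, the measure has exactly $n$ points in its support, so the monomials $1,x,\ldots,x^{n-1}$ are linearly independent in $L^2(\mu)$. Apply Gram--Schmidt with the positive-leading-coefficient sign convention to obtain orthonormal polynomials $p_0=1,p_1,\ldots,p_{n-1}$. Multiplication by $x$ in $L^2(\mu)$, expressed in this basis, is a symmetric operator whose matrix $J$ is tridiagonal (because $xp_k$ has degree $k+1$, hence is orthogonal to $p_j$ for $j<k-1$), with off-diagonal entries $b_k=\langle xp_{k-1},p_k\rangle_\mu>0$ from the sign convention. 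Thus $J$ is a Jacobi matrix. Checking that $e_1$ corresponds to $p_0=1$ and that the spectral decomposition of multiplication by $x$ on $L^2(\mu)$ identifies $\lambda_i$ with eigenvalues and $v_i$ with the first coordinate of the corresponding eigenvector, one sees that $J\mapsto(\lambda,v)$ under the forward map. Uniqueness of the three-term recurrence with $b_k>0$ yields injectivity of the forward map.

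\textbf{Smoothness of the inverse.} The Gram matrix of $1,x,\ldots,x^{n-1}$ in $L^2(\mu)$ is $G_{jk}=\sum_i v_i^2 \lambda_i^{j+k}$, a polynomial in $(\lambda,v)$. Gram--Schmidt is a rational operation in the entries of $G$ whose only denominators are principal minors; these remain strictly positive as long as $1,\ldots,x^{n-1}$ are independent in $L^2(\mu)$, which holds throughout the target. Hence the entries of $J$ depend smoothly on $(\lambda,v)$.

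\textbf{Main obstacle.} The genuinely substantive ingredient is not the bookkeeping but the inverse spectral construction: one must argue that the three-term recurrence really does yield a Jacobi matrix with strictly positive off-diagonal entries, and that the assignment is smooth and bijective. The delicate point is the nonvanishing of the $b_k$, which I isolate as the statement that the support of $\mu$ consists of $n$ distinct atoms so that $\dim L^2(\mu)=n$; the sign convention for the polynomial leading coefficients then forces $b_k>0$. With these steps in place, the map and its inverse are both smooth, completing the diffeomorphism.
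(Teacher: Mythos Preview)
Your proof is correct and, once unpacked, is the same construction as the paper's, in different clothing. The paper inverts the map by applying Gram--Schmidt to the Krylov sequence $v, \Lambda v, \ldots, \Lambda^{n-1} v$ in $\RR^n$ and setting $J = Q^T \Lambda Q$; you instead apply Gram--Schmidt to $1, x, \ldots, x^{n-1}$ in $L^2(\mu)$ and take the matrix of multiplication by $x$. The isometry $L^2(\mu) \to \RR^n$, $p \mapsto (v_1 p(\lambda_1), \ldots, v_n p(\lambda_n))$, carries your monomials to the paper's Krylov vectors and intertwines multiplication by $x$ with multiplication by $\Lambda$, so the two Gram--Schmidt processes are literally the same computation. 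Your orthogonal-polynomial language does buy something: tridiagonality and the positivity $b_k>0$ drop out immediately from the three-term recurrence, and you address smoothness of both directions explicitly, whereas the paper only sketches the inverse and leaves smoothness implicit.
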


Parlett  made the intriguing observation that this algorithm is used by numerical analysts to obtain a tridiagonal matrix from a full symmetric matrix, not from a diagonal matrix.

\begin{proof} We sketch a procedure to invert this map presented in \cite{DNT}. Define the matrix $\Lambda$ and the vector $v$ as
\[ \Lambda= \diag(\lambda_1 , \lambda_2 , \ldots , \lambda_n)\ , \quad v = (v_1, v_2, \ldots, v_n) \]
and consider the sequence of vectors $v, \Lambda v, \ldots, \Lambda^{n-1} v$. The vectors $q_1 = v, q_2, \ldots, q_n$ obtained from this sequence by applying the Gram-Schmidt method are the columns of an orthogonal matrix $Q$ for which $J = Q^T \,\Lambda\, Q$.
\end{proof}

The entries of the vector $v$ are the {\it norming constants} associated to $J$.
Still in analogy with KdV, as $J(t)$ solves equation $(J)$, the eigenvalues stay put, as Flaschka knew, and Moser showed that the norming constants $v(t)$ varied in a simple fashion:
simply normalize (under the Euclidean norm) the vector $exp(t\Lambda)\ v(0)$.

From Proposition \ref{asymptotics}, at $t= \pm \infty$ Jacobi orbits $J(t)$ converge to diagonal matrices with ordered eigenvalues along the diagonal entries.
Using norming constants, Moser computed the {\it scattering map} of the Toda flow. Particles group in pairs with the same asymptotic velocity at extreme times:  the quantity of interest is the {\it shift}, the distance between the two straight lines tangent to the asymptotic motion at $\pm \infty$ of particles in the same pair.

Norming constants, alas, do not parameterize the limit matrices of the flows. Actually, they degenerate on a large part of the boundary of the set of Jacobi matrices. This will be circumvented by the {\it bidiagonal coordinates} in section \ref{bidiagonal}.

There is a basic issue which has not been handled carefully so far. Equation $(J)$, given in terms of skew symmetric matrices $A(t)$, was shown to imply that the solution is an orthogonal conjugation of the initial condition $J(0)$ --- thus, in particular, $J(t)$ is always real, symmetric, and eigenvalues are preserved. But why should the evolution preserve the tridiagonal form? This has to happen, if Flaschka's change of variable preserves the physical meaning of the variables. Also, this fits with the  solution by inverse variables of $(J)$. A more conceptual argument showing that $J(t)$ is always a Jacobi matrix will be presented in the next section. Yet another argument will  come up in Section \ref{factorizations}.

\section{Some symplectic geometry}

From its physical description in terms of positions $x_i$ and velocities $y_i$, it is clear the Toda lattice admits a Hamiltonian formulation in $\RR^{2n}$. By Flaschka's change of variables, after removal of the (trivial) evolution of the center of mass, the phase space for the differential equation becomes the set of Jacobi matrices with trace equal to zero, a cone of dimension $2n-2$, from Theorem \ref{normingconstants}. Somehow, one should be able to transfer the original Hamiltonian formulation to the new variables, and still proceed with the study of the Toda lattice as a problem in mechanics within the new phase. This indicates a more general context, the starting point of a vast field,  {\it symplectic geometry}. In the next subsection, we outline some basic requisites for this project.

\subsection{Complete integrability}

We start with a brief description of a more general definition of {\it Hamiltonian formulation} of a vector field. Take a differential equation with phase space $M$, or more precisely, consider the associated vector field $Z$ defined on the tangent bundle $TM$. First equip $M$ with a  closed, nondegenerate, $2$-form $\omega$: the pair $(M,\omega)$ is a \textit{symplectic manifold}, necessarily of even dimension, say $2n$ (excellent sources for symplectic geometry are \cite{AM}, \cite{GS}).

Each \textit{Hamiltonian} $H: M \to \RR$ gives rise to a vector field $X_H$ as follows: the contraction of $\omega$ with $X_H$ should obtain the 1-form $dH$. Said differently, for every vector field $Y$ in $M$, at each $m \in M$
\[ \omega_m(X_H(m), Y(m)) = dH(Y(m)) = Y(H)(m), \]
where $Y(H)(m)$ is the derivative of $H$ along $Y$ at $m$. The vector field $Z$ admits a Hamiltonian formulation if $Z = X_H$ for an appropriate choice of $\omega$ and $H$.

The simplest example is the standard 2-form in $\RR^{2n} = \{(x,y); \, x, y \in \RR^n\} $ given by $\omega = \sum_k x_k \wedge y_k$. Let us find the vector field $X_H =\sum_k A_k \partial _{x_k} + \sum_k B_k \partial _{y_k}$ associated to the Hamiltonian $H: \RR^{2n} \to \RR$ (here, $\partial_z$ is the partial derivative in the variable $z$, and $H_z = \partial_z H$). For an arbitrary vector field $Y = \sum_k a_k \partial _{x_k} + \sum_k b_k \partial _{x_k}$,
\[ \omega(X_H, Y) = \sum_k - a_k A_k + \sum_k b_k B_k =  \sum_k a_k H_{x_k} + \sum_k b_k H_{y_k} = dH(Y) \, , \]
and, as is well known, $X_H = \sum_k H_{y_k}  \partial_{x_k} - \sum_k H_{x_k} \partial_{y_k}$.

When is a Hamiltonian vector field $X_H$  \textit{completely integrable}?
Complete integrability requires $n$ {\it commuting}  Hamiltonians $H_k: M \to \RR$ (i.e., their induced vector fields $X_k = X_{H_k}$ commute, or equivalently, such that $\{ H_i, H_j\}=0$ for the {\it Poisson bracket} induced by the 2-form $\omega$, $\{ H_i, H_j\} = \omega(X_{H_i}, X_{H_j})$ ) among themselves and with $H$ itself. Finally, the Hamiltonians $H_k$ should be {\it functionally independent} (i.e., their gradients should be linearly independent on a dense set of $M$).

Very few dynamical systems are completely integrable, but these are especially important for being situations in which explicit computations may be performed. Indeed, one can make precise the idea that a generic Hamiltonian vector field does not have a second conserved quantity (recall that $H$ itself is conserved along orbits). But this is just the opposite of what we expect from certain iterations in numerical analysis: to compute eigenvalues, for example, we expect to change something (an original matrix, an approximation of an eigenvector) without varying the objects being computed (the eigenvalues themselves).

The \textit{Liouville-Arnold-Jost theorem}  states that, under appropriate
hypothesis, the phase space of completely integrable equations foliates into invariant
tori (i.e., products of lines and circles),
given by levels of the conserved quantities $H_k$ (frequently called the \textit{action variables} in this context).
In each torus, in  \textit{angle variables}, the evolution is just straight line motion.
The angles vary smoothly at neighboring tori, and the global dynamics is mostly dependent
on the arithmetic properties of the angular velocities.

\subsection{Toda flows in coadjoint orbits} \label{coadjoint}

Using Flaschka's change of variables, one might push forward the standard 2-form in
$\RR^{2n}$ to ${\mathcal J}_0$, the cone of Jacobi matrices with zero trace, thus converting it into a symplectic manifold. The surprising fact is that the resulting 2-form comes up from another construction of great interest, which we now describe briefly.

A large class of symplectic manifolds is given by {\it coadjoint orbits}, equipped with the {\it Lie-Kirillov-Poisson} 2-form. Let $G$ be a (finite dimensional) \, Lie group, $\mathfrak G$ its Lie algebra, and identify ${\mathfrak G}^\ast$, the dual of the Lie algebra, by means of a {\it nondegenerate coupling}
\[ (\alpha, A) \in {\mathfrak G}^\ast \times {\mathfrak G} \to \langle \alpha , A \rangle \in \RR. \]
The coupling is bilinear and nondegeneracy means that the restrictions
$\langle \alpha, . \rangle$ and $\langle . , A \rangle$, for $\alpha, A \ne 0$, give rise to nonzero functionals respectively on ${\mathfrak G}^\ast$ and ${\mathfrak G}$.
Thus, all linear functionals in ${\mathfrak G}$ are of the form $A \mapsto \langle \alpha, A  \rangle$ for some  $\alpha \in {\mathfrak G}^\ast$.

For the Toda flow, start with $G = U^+$, the group of $n \times n$ upper triangular real matrices with positive diagonal entries. Then $\mathfrak G= \mathfrak U$ is the vector space of real upper triangular matrices and we may identify  ${\mathfrak G}^\ast$ with $\cS$, the vector space of real, symmetric matrices, through the pairing $ \langle S, A\rangle = \tr S A$.

The group $G$ acts on itself by conjugation and on its Lie algebra $\mathcal G$ by its derivative at the origin, {\it the adjoint action}. For $G = U^+$, the adjoint action is given by $( g, U) \in G \times {\mathfrak G} \mapsto Ad_g(U) = g U g^{-1} \in {\mathfrak G}$. The {\it coadjoint action} from $G \times {\mathfrak G}^\ast$ to ${\mathfrak G}^\ast$
satisfies the natural duality,
\[ \langle Ad^\ast_g (\alpha) , A \rangle = \langle \alpha ,  Ad_{g^{-1}} ( A) \rangle, \quad \forall \, \alpha \in {\mathfrak G}^\ast, \forall \, A \in {\mathfrak G}.\]

Back to the Toda context,
\[ \forall \, \alpha \in \cS, \forall \, A \in \mathfrak U \, \langle,  Ad^\ast_g \alpha , A \rangle = \langle \alpha ,  Ad_{g^{-1}} ( A) \rangle = \tr \alpha  {g^{-1}}  A g = \tr (g \alpha  {g^{-1}})  A .\]
The bad news is that $g \alpha  {g^{-1}}$ is not a symmetric matrix.
Denote by $\mathfrak {sU}$ the vector space of real, strictly upper triangular matrices. Clearly, for $E \in {\mathfrak {sU}}$ and $A \in {\mathfrak U}$ we have $\tr E A =0$.  Consider the (unique) splitting
\[ M = S + E = \Pi_{\cS} M + \Pi_{\mathfrak {sU}} M, \quad S \in \cS, E \in {\mathfrak {sU}}.\]
We clearly have
\[\forall \, A \in \mathfrak U \ \langle Ad^\ast_g \alpha , A \rangle = \tr (g \alpha  {g^{-1}})  A = \tr \Pi_{\cS} (g \alpha  {g^{-1}})  A, \]
from which we finally obtain $Ad^\ast_g \alpha = \Pi_{\cS} (g \alpha  {g^{-1}})$.

By definition, $M$ and $\Pi_{\cS} M$ have the same lower triangular part (and this includes the diagonal). In particular,  as observed by Adler \cite{Ad} and Kostant \cite{K}, Jacobi matrices with fixed trace form a {\it coadjoint orbit}.

We now recall the celebrated Lie-Kirillov-Poisson 2-form on a coadjoint orbit. Let $\cO_\alpha$ be the coadjoint orbit through $\alpha$. Any vector $a$ in the tangent space of $\cO_\alpha$ at $\alpha$ is the derivative at zero of $Ad^\ast_{exp(tA)} \alpha$ fro some $A$ in the Lie algebra ${\mathfrak G}$.  Let $A$ and $B$ in ${\mathfrak G}$ give rise to  tangent vectors $a$ and $b$. Set
\[ \omega_\alpha( a, b ) =  - \langle \alpha, [ A, B] \rangle. \]
The minus sign is  innocuous, but there is so much to prove here. First, it is not clear that $\omega$ is well defined: other elements in $\mathfrak G$ might give rise to the same tangent vectors at $\alpha$. More, $\omega$ has to be proven nondegenerate and closed. We will take all those issues for granted.

Instead, we continue with the computations related to the Toda flow.
In this case, the curve $Ad^\ast_{exp(tA)} \alpha = \Pi_\cS \ e^{tA} \alpha e^{-tA}$ has the tangent vector $\Pi_\cS [A, S]$ at $\alpha$. The 2-form is given by
\[ \omega_S ( \Pi_\cS [A, S], \Pi_\cS [B, S] ) = - \tr S [A, B].\]

A simple computation shows that the initial Hamiltonian $H(x,y)$ for the Toda lattice in physical variables converts to $H(S) = \tr S^2/2$ in Flaschka' s variables.
\begin{proposition} \label{coadjointToda}
The Toda lattice is the vector field $X_H$ associated to the Hamiltonian $H(S) = \tr S^2/2$ defined on a coadjoint orbit $\cO_S$.
\end{proposition}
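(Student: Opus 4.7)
The plan is to use the general formula for the Hamiltonian vector field on a coadjoint orbit: if $H: \cO_\alpha \to \RR$ is smooth, then $X_H(\alpha) = ad^\ast_{A_0}\alpha$, where $A_0 \in \mathfrak{G}$ is any element whose pairing with tangent vectors reproduces $dH_\alpha$. Here $\mathfrak{G}=\mathfrak{U}$, the pairing is $\langle Y, A\rangle = \tr YA$, and the computation $Ad^\ast_g S = \Pi_\cS(gSg^{-1})$ from the preceding paragraph, differentiated at $g=I$, yields $ad^\ast_A S = \Pi_\cS[A, S]$. So the task reduces to identifying the right $A_0 \in \mathfrak{U}$ and simplifying $\Pi_\cS[A_0, S]$.

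For $H(S) = \tr S^2/2$, the intrinsic differential at $S$ applied to any symmetric tangent vector $Y$ is $dH_S(Y) = \tr(SY)$. I must therefore find $A_0 \in \mathfrak{U}$ with $\tr(YA_0) = \tr(SY)$ for every symmetric $Y$. Writing $S = L + D + L^T$ with $L$ strictly lower and $D$ diagonal, and decomposing $Y$ the same way, a short bookkeeping of which triangular products leave a nonzero trace (diagonal pairs with diagonal; strictly lower pairs with strictly upper) singles out $A_0 = D + 2L^T$. The factor two encodes the symmetry of $Y$, which double-counts off-diagonal contributions. Compactly: $A_0 = S - \Pi_{sk}(S)$, since $\Pi_{sk}(S) = L - L^T$.

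The final step is then immediate. Because $S$ commutes with itself,
\[ [A_0, S] = [S - \Pi_{sk}(S), S] = -[\Pi_{sk}(S), S] = [S, \Pi_{sk}(S)].\]
The commutator of a symmetric matrix with a skew-symmetric matrix is symmetric, so $\Pi_\cS$ acts trivially. Hence $X_H(S) = \Pi_\cS[A_0, S] = [S, \Pi_{sk} S]$, which is precisely the right-hand side of equation $(S)$. If one prefers to verify the defining identity $\omega_S(X_H, Y) = dH_S(Y)$ head on, it reduces to $-\tr S[A_0, B] = \tr(S\,\Pi_\cS[B,S])$ for all $B \in \mathfrak{U}$; both sides collapse to $\pm\tr(A_0\,\Pi_\cS[B,S])$ after using the cyclic invariance of the trace, the fact that $\tr$ of an upper-triangular times a strictly upper-triangular matrix vanishes, and that $\tr$ of a symmetric times a skew-symmetric matrix vanishes.

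The main obstacle is purely the bookkeeping that pins down the factor of two in $A_0$; once the pleasant identity $A_0 = S - \Pi_{sk}(S)$ is in hand, the algebra collapses. A secondary concern is the sign convention for the Lie-Kirillov-Poisson form: the author's pre-emptive remark that ``the minus sign is innocuous'' gives license to absorb any overall sign into the choice of $\omega$, so Toda is identified with $X_H$ up to this conventional sign.
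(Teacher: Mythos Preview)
Your proof is correct and follows essentially the same route as the paper. Both arguments identify the Lie-algebra element representing $dH_S$ as $A_0=\Pi_{up}S$ (you write it explicitly as $D+2L^T=S-\Pi_{sk}S$; the paper extracts it via the orthogonalities $\mathfrak A\perp\cS$ and $\mathfrak U\perp\mathfrak{sU}$), and then both use $[\Pi_{up}S,S]=[S,\Pi_{sk}S]$ together with the symmetry of that commutator to conclude. Your invocation of the general formula $X_H=ad^\ast_{A_0}\alpha$ is a mild shortcut over the paper's direct verification of $\omega_S(X_H,Y)=Y(H)$, but your closing paragraph sketches exactly that verification, and your caveat about the sign convention is apt.
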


\begin{proof}
We search for a vector field $X_H = \Pi_\cS [U, S]$ for which, given any vector field $Y = \Pi_\cS [V, S]$, we must have
\[ \omega_S(X_H, Y) = - \tr S [U,V] = Y(H) = -\tr S \Pi_\cS [V, S]. \]
Let $\mathfrak A$ be the vector space (Lie algebra!) of real, skew symmetric matrices. We consider another splitting,
\[ M = A + U = \Pi_{sk}\,  M + \Pi_{up} M, \quad A \in {\mathfrak A}, U \in {\mathfrak U} \]
and make use of the orthogonalities ${\mathfrak A} \perp \cS$ and ${\mathfrak U} \perp {\mathfrak {sU}}$:
\[ \tr S \ [U,V] =  \tr S \ \Pi_\cS [V, S] =  \tr \Pi_{up} S \ \Pi_\cS [V, S] \]
\[= \tr \Pi_{up} S  \ [V, S] = \tr [\ S,\ \Pi_{up} S ] \ V= -\tr [\ S,\ \Pi_{sk}\,  S ] , \]
so that $-\tr [S, U] \ V = \tr [\ S,\ \Pi_{sk}\,  S ] \ V$, for all $V \in {\mathfrak U}$, which is equivalent to say that $[S,U] = [\ S,\ \Pi_{sk}\,  S ] + E$, for some $E \in {\mathfrak {sU}}$, so that
\[ X_H = \Pi_\cS [S,U ]  = \Pi_\cS [\ S,\ \Pi_{sk}\,  S ] =  [\ S,\ \Pi_{sk}\,  S ] . \]
\end{proof}

We are thus led to a very geometric explanation for the fact that the solution of the equation $(J)$, $J'(t) = [ J(t), B (J(t))]= [ J(t), \Pi_{sk}\,  J(t)], $ is always a Jacobi matrix, if $J(0)$ is: it simply does not leave the coadjoint orbit $\cO_{J(0)}$.

A similar computation obtains the commutativity of the Hamiltonians
$H_k(J) = \tr J^k, k=1, \ldots,n$). Indeed, the computation above generalizes to yield the {\it Adler-Kostant-Symes} theorem (\cite{S1}), a criterion to identify families of commuting Hamiltonians.
For the Toda lattice on Jacobi matrices with trace given by the initial condition $J(0)$, the $H_k$'s form a family of $n-1$ commuting flows which are functional independent throughout $\cO_{J(0)}$, since Jacobi matrices have simple spectrum from Lemma \ref{simplicity}. Thus, the original Toda vector field and any other vector field given by a Hamiltonian of the form $H_f(J) = \tr f(J)$ are completely integrable.

From Theorem \ref{normingconstants}, the related invariant Liouville-Arnold tori, given by sets of $n \times n$ Jacobi matrices with fixed spectrum, is
diffeomorphic to the set of possible choices for norming constants --- the positive octant of the unit sphere in $\RR^{n-1}$.

\subsection{Toda flows in larger phase spaces} \label{larger}

From the computations above, the same Hamiltonian $H = \tr S^2 /2$ induces equation $(S)$ on larger coadjoint orbits and the proof of complete integrability on these phase spaces requires many more conserved quantities. The problem was considered in two cases. In \cite{DLNT}, the authors consider the orbits of maximal dimension given by  $2 [n^2/4]$ in$\cS$. The new commuting Hamiltonians $H(S)$ are obtained by {\it chopping}:
they are the (symmetric functions of the) roots of the determinants of the matrices obtained by
removing the first $k$ rows and last $k$ columns of the matrix $S - \lambda I$. In analogy to Moser's computations in the Jacobi orbit, angle variables are essentially the first components of (generalized) eigenvectors.

The generic phase space for real, nonsymmetric matrices
is handled in \cite{DLT1}. First, one needs to interpret the Toda equation as a Hamiltonian on an appropriate coadjoint orbit of dimension $n^2 - n$ of a Lie group given by a semidirect product. At a matrix $M$, the new commuting Hamiltonians are the coefficients of the polynomial $p(z,\lambda)=\det(M - z M^T - \lambda)$,
so a Riemann surface $p(z,\lambda)=0$ is associated to $M$ and is invariant under the Toda flow. The existence of such additional structure has been known since the first studies of the {\it periodic} Toda flow, where the particles move on a circle instead of in the line (\cite{VM}, \cite{Kr}, \cite{KrN}). For nonsymmetric matrices, the additional
angle variables are obtained from an extension of the Abel-Jacobi map,
by integrating a set of explicit meromorphic 1-forms on the surface along specific divisors, related again by generalized eigenvectors.

\subsection{The isospectral  manifold of tridiagonal matrices}

The set ${\mathcal J}_\Lambda$ of Jacobi matrices with a given simple spectrum
\[ \lambda_1 < \lambda_2 < \ldots < \lambda_n, \qquad \Lambda = \diag(\lambda_1, \lambda_2,\ldots, \lambda_n) \]
is diffeomorphic to $\RR^{n-1}$. There is a natural enlargement of this set --- define $\cT_\Lambda$, the  set of real, symmetric tridiagonal matrices with spectrum $\Lambda$.  This set is actually a compact manifold (\cite{T}).

\begin{figure}[h]
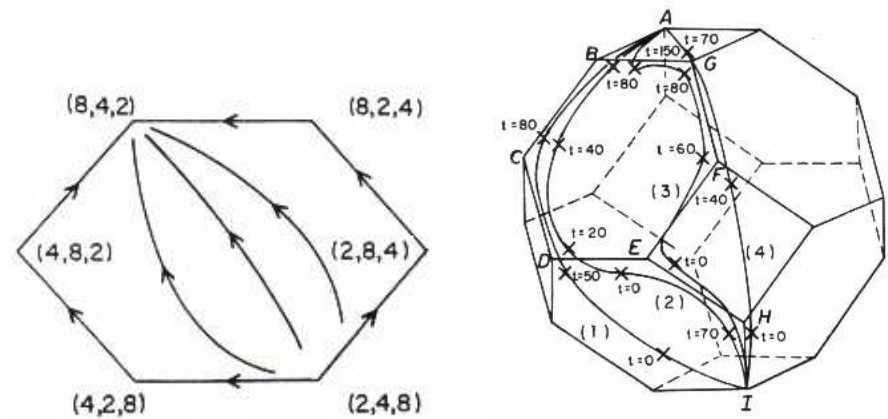

\centering
\mbox{\includegraphics[width=.45\linewidth]{hexa.eps}\quad\quad
\includegraphics[width=.45\linewidth]{cubocta.eps} }
\caption{$\bar{{\mathcal J}_\Lambda}$, for $n=3$ and $4$ } \label{polytopes}
\end{figure}

The first step in the understanding of $\cT_\Lambda$ is taking the closure (within the space of real,symmetric matrices) of ${\mathcal J}_\Lambda$. It turns out that $\bar{{\mathcal J}_\Lambda}$  has an interesting combinatorial structure, which we now describe.

For matrix dimensions $n=3$ and $n=4$ (and any choices of different eigenvalues) this set is homeomorphic to the polytopes in the figure. The vertices correspond to the $n!$ diagonal matrices with the same spectrum as $\Lambda$. For $n=3$, the diagram consists of matrices with eigenvalues 2, 4 and 8. Vertices are diagonal matrices, in a self-explanatory notation. Edges correspond to matrices having a single zero in the main off-diagonal entries. Thus the top edge consists of matrices with $(1,2)$ entry equal to zero; the eigenvalue $8$ is trapped at entry $(1,1)$ and the bottom $2 \times 2$ block consists of Jacobi matrices with eigenvalues $4$ and $2$, whose closure is homeomorphic to the whole edge. Edges are invariant under the Toda flow, and arrows indicate the sense of the flow. All interior points have the same $\alpha$ and $\omega$ limits.

For $n=4$ the boundary still consists of points with zero off-diagonal entries, which  split the matrix in two kinds of sets, eight of which are homeomorphic to the $3 \times 3$ case (hexagons), and six which are homeomorphic to the product of two $2 \times 2$ blocks with fixed spectrum, the quadrilaterals. Again, the curves represent some orbits of the Toda flow. For the general case (\cite{T}), \cite{BFR}), define the {\it permutohedron} $\cP_\Lambda= \hbox{conv}_{\pi \in S_n } \{(\lambda_{\pi(1)}, \ldots,\lambda_{\pi(n)})\}$.

\begin{theorem} The map from $\bar{{\mathcal J}_\Lambda}$ to the  permutohedron $\cP_\Lambda$
\[   J  \, =  \, Q^T \,\Lambda\, Q \, \mapsto \, \hbox{diag}(Q \,\Lambda\, Q^T) \in \cP_\Lambda \} \]
is a homeomorphism.
\end{theorem}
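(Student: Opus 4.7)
The plan is to show that $\phi \colon J \mapsto \mathrm{diag}(Q\Lambda Q^T)$ (with $J = Q^T \Lambda Q$ and $Q$ orthogonal) is a well-defined continuous bijection onto $\cP_\Lambda$; since both source and target are compact Hausdorff, this automatically upgrades to a homeomorphism. \emph{Well-definedness:} simplicity of the spectrum $\Lambda$ forces $Q$ to be unique up to $Q \mapsto DQ$ for a diagonal signature matrix $D$ with $D^2 = I$ (anything commuting with $\Lambda$ is diagonal), and then $DQ\Lambda Q^T D$ has the same diagonal as $Q\Lambda Q^T$. \emph{Continuity:} the identity $\phi(J)_i = \tr(P_i(J)\,\Lambda)$, where $P_i(J)$ is the spectral projection onto the $\lambda_i$-eigenspace of $J$, exhibits $\phi$ as smooth, since the $P_i$ depend smoothly on $J$ as long as the spectrum remains simple and constant. \emph{Image in $\cP_\Lambda$:} writing $\phi(J)_i = \sum_k \lambda_k Q_{ik}^2$, the matrix $(Q_{ik}^2)$ is doubly stochastic (rows and columns of $Q$ are unit vectors), so by Birkhoff's theorem $\phi(J)$ is a convex combination of the vectors $(\lambda_{\pi(1)}, \ldots, \lambda_{\pi(n)})$ for $\pi \in S_n$, i.e.\ a point of $\cP_\Lambda$.

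For bijectivity I would run an induction on $n$ matching the cell structures of the two spaces. Both $\bar{\mathcal{J}_\Lambda}$ and $\cP_\Lambda$ are CW complexes whose cells are indexed by ordered set-partitions of $\{\lambda_1, \ldots, \lambda_n\}$: on the tridiagonal side, one specifies the subset $I \subset \{1, \ldots, n-1\}$ of vanishing principal off-diagonals together with a distribution of the eigenvalues among the resulting diagonal blocks; on the permutohedron side, the standard face structure. At the $n!$ vertices, taking $Q$ to be the appropriate permutation matrix gives a direct computation $\phi(D_\pi) = (\lambda_{\pi^{-1}(1)}, \ldots, \lambda_{\pi^{-1}(n)})$ for $D_\pi = \mathrm{diag}(\lambda_{\pi(1)}, \ldots, \lambda_{\pi(n)})$, so the vertex map is the bijection $\pi \mapsto \pi^{-1}$ on $S_n$. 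On a block-diagonal stratum, the eigenvectors of $J$ break up according to the block decomposition and $\phi$ factors as a product of $\phi$-maps for the smaller isospectral closures; the inductive hypothesis then identifies each proper face of $\bar{\mathcal{J}_\Lambda}$ bijectively with the corresponding face of $\cP_\Lambda$.

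What remains, and where the main obstacle sits, is the interior case: that $\phi$ restricts to a homeomorphism between $\mathcal{J}_\Lambda$ and $\mathrm{int}(\cP_\Lambda)$. By Theorem \ref{normingconstants} the source is diffeomorphic to the open positive octant of the unit sphere in $\RR^n$, and the target is an open $(n-1)$-cell; both are connected manifolds of the same dimension. The inductive boundary matching makes $\phi$ proper and sends $\partial \bar{\mathcal{J}_\Lambda}$ bijectively onto $\partial \cP_\Lambda$, so by invariance of domain and a degree argument it suffices to verify that $\phi$ is a local diffeomorphism at a single point. This step is the subtle one, because $\phi$ has no closed-form inverse and the tridiagonality constraint on $Q^T\Lambda Q$ is implicit; my plan is to check nonvanishing of the Jacobian near a vertex, where both the norming-constant parametrization of $\mathcal{J}_\Lambda$ and barycentric coordinates on $\cP_\Lambda$ are explicit enough to make the calculation tractable, and then propagate by connectedness, the degree of $\phi$ being forced to $\pm 1$ by the vertex bijection already established.
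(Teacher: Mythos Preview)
The paper does not actually prove this theorem; it only states it and defers to three references: the original argument in \cite{T}, a reinterpretation by Bloch--Flaschka--Ratiu \cite{BFR} via Atiyah's convexity theorem for Hamiltonian torus actions (identifying the map as a momentum map), and a later simplification in \cite{LT}. So there is no in-paper proof to compare against; your outline is closer in spirit to the direct topological route of \cite{T} and \cite{LT} than to the symplectic machinery of \cite{BFR}.

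Your well-definedness, continuity, Birkhoff, and boundary-induction steps are all sound. The interior step, however, has a real gap. Degree $\pm 1$ together with nonvanishing Jacobian at a \emph{single} point does not force injectivity: a continuous map between closed $(n-1)$-balls can restrict to a homeomorphism on the boundary, be a local diffeomorphism on a dense open subset of the interior, and still fold over itself. ``Propagate by connectedness'' is not a valid move here, because nonvanishing of a Jacobian is an open condition but not a closed one---it does not spread along a connected set. What your argument actually needs is that $d\phi$ is invertible at \emph{every} point of $\mathcal J_\Lambda$; then $\phi|_{\mathcal J_\Lambda}$ is a proper local diffeomorphism into $\mathrm{int}(\cP_\Lambda)$, hence a covering map, and simple connectivity of the open cell together with your boundary homeomorphism finishes. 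Establishing this global nondegeneracy (equivalently, direct injectivity of $\phi$ on the top cell) is precisely the substantive content supplied by the cited references---in \cite{BFR} it is absorbed into Atiyah's theorem, while \cite{T} and \cite{LT} provide hands-on arguments. Your scaffolding is correct, but the last sentence must be replaced by an honest global computation or an appeal to one of those sources.
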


The existence of this homeomorphism in \cite{T} led
Bloch, Flaschka and Ratiu (\cite{BFR}) to search for a proof of the statement in terms of Atiyah's theorem on the convexity of the image of Hamiltonian torus actions (\cite{At}). Their proof starts with the identification of the homeomorphism as a momentum map, given by the formula above. A simple proof was later presented in \cite{LT}.

\begin{figure}[ht]
\centerline{\epsfig{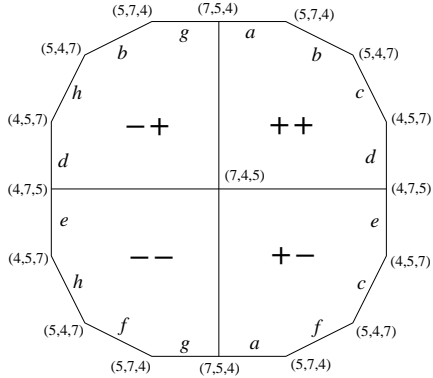}}
\caption{\capsize Gluing four copies of  $\bar{{\mathcal J}_\Lambda}$ }
\label{ilailah}
\end{figure}

From $\bar{{\mathcal J}_\Lambda}$ to the full isospectral manifold $\cT_\Lambda$, it's a game of mirrors (or, more precisely, the construction of an appropriate {\it Coxeter group} (\cite{T})). In a nutshell, dropping the signs of the off-diagonal entries of a real, symmetric tridiagonal matrix (preserving its symmetry!) does not change the eigenvalues --- this is something that numerical analysts use systematically: to compute eigenvalues of a matrix in
$\cT_\Lambda$, it suffices to handle Jacobi matrices. In particular, the sets of matrices in $\cT_\Lambda$ with nonzero entries split into $2^{n-1}$ connected components, all isomorphic to ${\mathcal J}_\Lambda$. To get $\cT_\Lambda$, one takes the closure of these components and glues them along faces which are naturally identified. For $n=3$ and $\Lambda= \{7,5,4\}$,  Figure \ref{ilailah} shows some edges already glued. Edges along the boundary have to be identified: boundary vertices must be the same, together with a sign (which?). The resulting bitorus is drawn in Figure \ref{bitorus} so as to emphasize the boundaries of the hexagons (more about this picture on  Section \ref{bidiagonal}).

\begin{figure}[ht]
\centerline{\epsfig{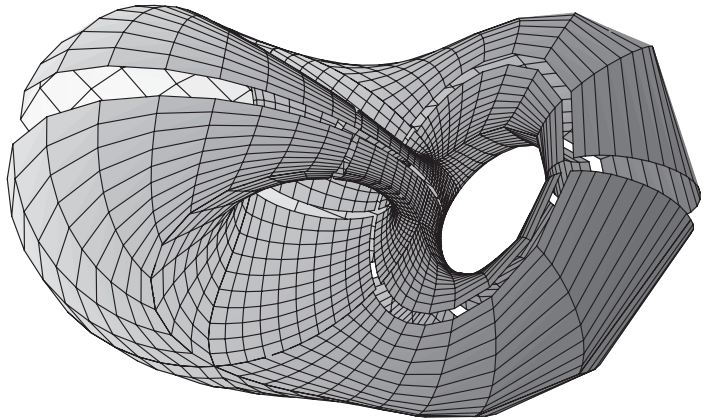}}
\caption{\capsize For $n=3$, $\cT_\Lambda$ is a bitorus}
\label{bitorus}
\end{figure}

The universal cover of $\cT_\Lambda$ is $\RR^{n-1}$ and the homology groups are torsion free (\cite{T}, \cite{Fr}). The sum of the partial traces, $F(S) = \sum_i (n-i+1) s_{ii}$ is a perfect Morse function: the equilibria are the diagonal matrices, and the Betti numbers are obtained by counting how many such matrices there are with a given signature.

\section{A missed opportunity}
We present another trail (\cite{LT}) leading to the Toda equation:  it might have anticipated the study of these equations by fifty years.

We fix notation. Let $\cM(n)$ consist of the $n \times n$ real matrices, $SO(n) \subset \cM(n)$ be the set of orthogonal matrices with positive determinant and $\Uu^+ (n) \subset \cM(n)$ be the set of upper triangular matrices with positive diagonal entries.

Clearly, both Lie groups act on $\cM(n)$ by conjugation.
For $M\in \cM(n)$,  set
\[ \cO_M^O = \{ Q^T M Q,\  Q \in SO(n) \}, \quad \cO_M^U = \{ R M R^{-1}, \ R \in \Uu^+ (n) \}\]
be respectively the {\it orthogonal} and {\it upper triangular} orbit through $M$. Clearly $\cO_M^O$ and  $\cO_M^U$ are connected manifolds of matrices with the same eigenvalues as $M$.

For convenience, let $M$ have  positive simple spectrum (i.e., positive distinct eigenvalues) --- we are interested in ${\mathcal I}_M = \cO_M^O \cap \cO_M^U$.

The dimension count might suggest that ${\mathcal I}_M$ is trivial, but this is not the case. Indeed, for a matrix $X \in \Ss$, there are $Q \in SO(n)$ and $R \in \Uu(n)$ such that
\[ X = Q^T M Q = R M R^{-1}  \ \Rightarrow \  (QR) \, M = M\, (QR). \]

In words, the matrix $QR$ commutes with $M$. Since $M$ has simple spectrum,  $QR$ is a function of $M$, in the sense that there exists $f : \RR \to \RR - \{0\}$ such that $QR = f(M)$. Notice that that  $Q$ and $R$  are real invertible matrices. This is the {\it $QR$ factorization} of $f(M)$ (see Appendix 1 for details). In general,  a real matrix $X$ with positive determinant admits a unique factorization $X =  [M]_Q [M]_R$, where $[M]_Q \in SO(n)$ and $[M]_R \in \Uu^+ (n)$ .

Adding up, given $f : \RR \to \RR ^+$, we obtain a matrix $\psi(f,M) \in {\mathcal I}_M$,
\[ \psi(f,M) = [ \ f(M) \ ]_Q ^T \ M \ [ \ f(M) \ ]_Q = [ \ f(M) \ ]_R \ M \ [ \ f(M) \ ]^{-1}_R \ .\]


\subsection{Commuting flows in ${\mathcal I}_M$}

The notation $\psi(f,M)$ was introduced to suggest that we are close to a flow on ${\mathcal I}_M$.

\begin{proposition} Let $f, g: \RR \to \RR^+$ and $M$ be a real matrix with real, simple spectrum. Then
\[ \psi(fg,M) = \psi(gf,M) = \psi(g, \psi(f,M)).\]
\end{proposition}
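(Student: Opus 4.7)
My plan is the following. The equality $\psi(fg,M)=\psi(gf,M)$ is immediate, since $f$ and $g$ are real-valued and commute as scalar functions; hence $(fg)(M)=(gf)(M)$ and both sides of that equality coincide identically. Only the nontrivial equality $\psi(g,\psi(f,M)) = \psi(fg,M)$ demands work.

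For this, I would exploit the triangular formulation $\psi(f,M) = R_f M R_f^{-1}$, where $R_f = [f(M)]_R$, and similarly introduce $Q_f = [f(M)]_Q$, $Q_g = [g(M)]_Q$, $R_g = [g(M)]_R$. The key preliminary observation is that conjugation commutes with the functional calculus on a matrix of simple spectrum, so
\[ g(\psi(f,M)) \,=\, g\bigl(R_f M R_f^{-1}\bigr) \,=\, R_f\, g(M)\, R_f^{-1} \,=\, R_f Q_g R_g R_f^{-1}. \]

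The heart of the argument is then to perform a single auxiliary QR factorization $R_f Q_g = Q' R'$ with $Q'\in SO(n)$, $R'\in \Uu^+(n)$. This will simultaneously furnish two QR factorizations,
\[ g(\psi(f,M)) \,=\, Q'\cdot\bigl(R' R_g R_f^{-1}\bigr), \qquad f(M)g(M) \,=\, (Q_f Q')\bigl(R' R_g\bigr), \]
in each case the triangular piece landing in $\Uu^+(n)$ because $\Uu^+(n)$ is a group. Reading off $R$-parts, I expect
\[ [g(\psi(f,M))]_R \,=\, R' R_g R_f^{-1}, \qquad [f(M)g(M)]_R \,=\, R' R_g, \]
and substituting these into the triangular formulation, a direct cancellation should then yield
\[ \psi(g,\psi(f,M)) \,=\, R' R_g\,M\,R_g^{-1}(R')^{-1} \,=\, \psi(fg,M). \]

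The only subtlety I anticipate is bookkeeping of determinants and signs: I need to verify that $f(M)$, $g(M)$, and the auxiliary product $R_f Q_g$ all have positive determinant, so that the orthogonal factor lies in $SO(n)$ and the triangular factor in $\Uu^+(n)$. Since $f,g>0$ and $M$ has positive simple spectrum, the eigenvalues $f(\lambda_i)$ and $g(\lambda_i)$ are positive, giving $\det f(M),\det g(M)>0$; and $R_f Q_g$ inherits positive determinant from $R_f\in\Uu^+(n)$ and $Q_g\in SO(n)$. Closedness of $\Uu^+(n)$ under products and inverses then ensures $R' R_g R_f^{-1}\in\Uu^+(n)$, so the identified QR factorizations are legitimate and the cancellation above is valid.
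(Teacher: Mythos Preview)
Your proof is correct. The approach is a close cousin of the paper's but uses the dual formulation. The paper works on the orthogonal side, proving the identity $[\,(fg)(M)\,]_Q = [\,f(M)\,]_Q\,[\,g(\psi(f,M))\,]_Q$ via the general rules $[QX]_Q = Q[X]_Q$ and $[XR]_Q = [X]_Q$ for $Q\in SO(n)$, $R\in\Uu^+(n)$; substituting this into the $Q$-formula for $\psi$ then gives the result. You instead work on the upper-triangular side, introducing the auxiliary factorization $R_fQ_g = Q'R'$ to read off both $[\,g(\psi(f,M))\,]_R$ and $[\,(fg)(M)\,]_R$ simultaneously, and then cancel in the $R$-formula for $\psi$. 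Each approach exploits one half of the double description $\psi(f,M)=Q_f^T M Q_f = R_f M R_f^{-1}$; the paper's route avoids naming an auxiliary factorization, while yours is slightly more explicit and makes the group structure of $\Uu^+(n)$ do the bookkeeping. One minor remark: you do not actually need the eigenvalues of $M$ to be positive for $\det f(M)>0$, since already $f:\RR\to\RR^+$ forces $f(\lambda_i)>0$; the positivity of the spectrum is a standing convenience assumption in this section rather than an ingredient of the argument.
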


\begin{proof} It suffices to show that
\[ [ \  f(M)\ g(M)\  ]_Q = [ \ f(M) \ ]_Q \ [ \ g(\psi(f,M)) \ ]_Q .\]
Now, from the uniqueness of the $QR$ decomposition, for $X$ with $\det X >0$,
\[ [ \ QX\ ]_Q = Q [ \ X\ ]_Q \hbox{  and  }
 [ \ XR\ ]_Q = [ \ X\ ]_Q, \hbox{  for any }   Q \in SO(n), \ R \in \Uu^+(n) . \]
Using that $f(PXP^{-1})= P f(X) P^{-1}$ and the first equality just above,
\[ [ \ g(\psi(f,M)) \ ]_Q = [ \  {g \big([ \  f(M))\ ]_Q^T \ M \ [ \  f(M))\ ]_Q} \big)\ ]_Q= \]
\[
\big[ \ [ \ {f(M)}\ ]_Q^T \ {g( M )} \ [ \ {f(M)}\ ]_Q\ \big]_Q =
[ \ {f(M)}\ ]_Q^T \big[ \  {g( M )} \ [ \ {f(M)}\ ]_Q\ \big]_Q. \]
Go back to the beginning and use the second equality:
\[ [ \ f(M)\ ]_Q \ [ \  g(\psi(f,M)) \ ]_Q = \big[ \  {g( M )} \ [ \ {f(M)}\ ]_Q\ \big]_Q = \]
\[\big[ \  {g( M )} \ [ \ {f(M)}\ ]_Q\ [ \ {f(M)}\ ]_R\ \big]_Q
=  \big[ \  {g( M )} \  {f(M)} \big]_Q  =  [ \  {(fg)( M )}  ]_Q. \]
\end{proof}

The underlying flow should now be evident: this is the content of the next result.

\begin{corollary} Let $M, f, {\mathcal I}_M = \cO_M^O \cap \cO_M^U$ and $\psi$ as above. The map
\[ t, \ X \in \RR \times {\mathcal I}_M \ \mapsto \ \phi_f(t, X) := \psi( e^{tf}, X)  \quad \]
is a globally defined flow on ${\mathcal I}_M$. Flows associated to functions $f$ and $g$ commute.
\end{corollary}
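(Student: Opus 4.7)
The plan is to derive every assertion directly from the multiplicative identity
\(\psi(fg,M)=\psi(g,\psi(f,M))\) established in the preceding Proposition, together with the basic facts about the \(QR\) factorization recalled just before it. The key observation is that for any fixed \(X\in\cM(n)\) with positive simple spectrum, the map \(f\mapsto\psi(f,X)\) converts pointwise multiplication of positive functions on \(\RR\) into composition of transformations of \({\mathcal I}_X\).

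First I would verify the three structural properties of a flow. Well-definedness: because \(X\in{\mathcal I}_M\) has the same spectrum as \(M\) (positive and simple), the definition of \(\psi\) applies to \(\psi(e^{tf},X)\), and since the output of \(\psi\) lies in \({\mathcal I}_X\) and \({\mathcal I}_X={\mathcal I}_M\), we stay in the intended phase space. Identity at \(t=0\): the constant function \(1\) satisfies \([1]_Q=I\), \([1]_R=I\), so \(\psi(1,X)=X\), and \(\phi_f(0,X)=\psi(e^0,X)=\psi(1,X)=X\). Semigroup law: since \(e^{tf}\cdot e^{sf}=e^{(t+s)f}\) as scalar functions on \(\RR\), the Proposition yields
\[
\phi_f(t+s,X)=\psi(e^{(t+s)f},X)=\psi(e^{sf}\,e^{tf},X)=\psi(e^{sf},\psi(e^{tf},X))=\phi_f(s,\phi_f(t,X)).
\]

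Next I would address global existence in \(t\). The only thing that could fail is the \(QR\) factorization defining \(\psi(e^{tf},X)\); by hypothesis on \(X\), the eigenvalues of \(e^{tf}(X)\) are \(e^{tf(\lambda_i)}>0\), so \(e^{tf}(X)\) is real with positive determinant and its unique \(QR\) factorization into \(SO(n)\cdot\Uu^+(n)\) exists for every \(t\in\RR\). Hence \(\phi_f\) is defined on all of \(\RR\times{\mathcal I}_M\).

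Finally, for commutativity of \(\phi_f\) and \(\phi_g\), I would apply the Proposition twice together with the commutativity of multiplication of positive scalar functions:
\[
\phi_f(t,\phi_g(s,X))=\psi(e^{tf},\psi(e^{sg},X))=\psi(e^{sg}\,e^{tf},X)=\psi(e^{tf}\,e^{sg},X)=\phi_g(s,\phi_f(t,X)).
\]
The only step requiring care is the first one, interpreting \(\phi_f\) applied to a point already in \({\mathcal I}_{\psi(e^{sg},X)}={\mathcal I}_M\); this is exactly the content of the Proposition and is, in my view, the main technical step, since it is what converts the abelian semigroup structure of positive functions on \(\RR\) into the commuting-flow structure on \({\mathcal I}_M\). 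Everything else is book-keeping.
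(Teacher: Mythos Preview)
Your proof is correct and is exactly what the paper intends: the paper states the Corollary without a written proof, remarking only that ``the underlying flow should now be evident'' from the preceding Proposition. You have spelled out that evidence---the identity $e^{tf}e^{sf}=e^{(t+s)f}$ combined with $\psi(fg,X)=\psi(g,\psi(f,X))$ gives the flow law, and $e^{tf}e^{sg}=e^{sg}e^{tf}$ gives commutativity---which is precisely the mechanism the paper has in mind.
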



\subsection{Diferential equations for the flows} \label{factorizations}

Take derivatives of  $\phi_f(t, M)$ to obtain the underlying differential equation. Set $Q(t) = [ \ e^{ f(M)}\ ]_Q$, $R(t) = [ \ e^{ f(M)}\ ]_R$ and $M(0)=M$.
\[ M(t) = \phi(t \ , M) = Q(t)^T \ M(0)\  Q(t) = R(t) \ M(0) \ R^{-1}(t)  \quad (\ast) \]
where $ e^{t f(M(0))} = Q(t) \ R(t)$,
so that, dropping time dependence,
\[ M'= (Q^T)' \  M(0) \ Q + Q^T M(0) \ Q'\ , \]
\[f(M(0))\  e^{t f(M(0))}  =  f(M(0)) \ Q \ R\ =  Q'\  R + Q \ R'. \]
To get a differential equation, eliminate $M(0) = Q(t) M(t) (Q(t))^T$:
\[ M'= (Q^T)'  Q M  +  M Q^T \ Q' \quad \hbox{and} \quad
Q \ f(M(0)) \ R\ = Q' R + Q \ R'. \]
Since $Q(t)$ is a curve of orthogonal matrices, $A(t) = Q^T \ Q'$ is a curve of skew symmetric matrices. Thus
\[ M'= - A M  +  M A = [M,A] \quad \hbox{and} \quad
 f(M) \  = Q^T \ Q' + \ R'R^{-1}. \]
 We are almost done: the matrix $f(M)$ in the last equation is a sum of a skew symmetric $Q^T \ Q'$ and and upper triangular $R'R^{-1}$, which bring to mind the factorization $M = \Pi_{sk}\,  M + \Pi_{up} M$ from Section \ref{coadjoint}.  Here $\Pi_{sk}\,  M$ and $M$ have the same strictly lower triangular part, which determines $\Pi_{sk}\,  M$. Adding up,
 \[ M'(t) = [\ M(t), A(t)\  ] = [\ M(t),\  \Pi_{sk}\,  \ f(M(t)) \ ]\ . \]
The case $f(x) = x$ is the Toda lattice after Flaschka's change of variables. Equation $(\ast)$ is Symes's  {\it solution by factorization} to the differential equation (\cite{S1}).

The fact that the Toda flow admits two different formulas by factorization, given by orthogonal and upper triangular conjugation of the initial condition, immediately implies that the evolution preserves the real tridiagonal symmetric form of the initial condition. Indeed, orthogonal conjugations preserve symmetry, and upper triangular conjugations preserve the upper Hessenberg form (i.e., the only nonzero entries below the diagonal lie in the subdiagonal of entries
$(i+1,i)$).

\section{Toda, QR and other algorithms}

In the fifties, Francis \cite{Fr} came up with the $QR$ algorithm to compute eigenvalues of symmetric matrices. Say $S = S_0$ is a real, symmetric matrix of positive simple spectrum. Consider the alternation of $QR$ decompositions and reorderings,
\[ S_0 = Q_0 R_0, \quad S_1 \, = \,  R_0 Q_0 \,  = \, Q_0^T\, S_0 \, Q_0 \, = \, R_0 \, S_0 \,  R_0^{-1}. \]

Clearly, $S_1 \in \cO_{S_0}$, so $S_0$ and $S_1$ are both symmetric with a common spectrum: the {\it $QR$ step} is a diffeomorphism from  $\cO_{S_0}$ to itself. The remarkable thing about it is that iteration of this map converges to a diagonal matrix $\Lambda$ --- since a $QR$ step clearly preserves spectrum, the diagonal entries of $\Lambda$ are the eigenvalues of $S_0$!

It was Moser, again, who drew Deift's attention to Symes's beautiful connection between
the Toda flow and the $QR$ algorithm \cite{S2}: at integer times $n$, the solution $J(t)$ of the
Toda differential equation satisfies $exp \ J(n) = E_n$, where $E_n$ is the $n$-th term in the
QR sequence starting from $E_0 = exp J_0$.
On the other hand, since the Toda equation is one within a family of flows of the kind $\dot{J} = [ f, \Pi_{skew} f(J)]$, one may fudge with the functional parameter (\cite{DNT}) and get a simpler relationship.

\begin{theorem}  The flow associated to  $\dot{S} = [ f, \Pi_{skew} \ln S]$ interpolates the $QR$ sequence $S_n$, where $S_0 = S(0)$. \end{theorem}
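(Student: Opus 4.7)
My plan is to specialize the general factorization-based flow of Section \ref{factorizations} to $f(x) = \ln x$. This choice is legitimate because $S_0$ has positive spectrum (as required for the $QR$ algorithm), and positivity of the spectrum is preserved along the flow since $S(t)$ is orthogonally conjugate to $S_0$. For this $f$ one has $e^{t f(S_0)} = S_0^t$, and the corollary of Section \ref{factorizations} gives directly
\[
\phi_{\ln}(t, S_0) \;=\; \psi(S_0^t, S_0) \;=\; [S_0^t]_Q^T \, S_0 \, [S_0^t]_Q.
\]
At integer times $t = n$ this reads $\phi_{\ln}(n, S_0) = [S_0^n]_Q^T\, S_0 \,[S_0^n]_Q$, so the task reduces to matching this matrix with the $n$-th iterate $S_n$ of the $QR$ algorithm.

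The heart of the argument is the classical product-factorization identity
\[
S_0^n \;=\; (Q_0 Q_1 \cdots Q_{n-1})\,(R_{n-1} R_{n-2} \cdots R_0),
\]
which I would prove by induction on $n$. The base case is $S_0 = Q_0 R_0$ by definition. For the inductive step, write $S_0^{n+1} = S_0 \cdot S_0^n$, use the elementary identity $S_k Q_k = Q_k S_{k+1}$ (immediate from $S_{k+1} = Q_k^T S_k Q_k$) to push every $Q$-factor to the left past the inductively supplied product, and close with $S_n = Q_n R_n$. Setting $\tilde Q_n = Q_0 \cdots Q_{n-1}$ and $\tilde R_n = R_{n-1} \cdots R_0$, the product $\tilde R_n$ is upper triangular with strictly positive diagonal (since each $R_k$ is), so uniqueness of the $QR$ decomposition yields $[S_0^n]_Q = \tilde Q_n$.

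The same inductive bookkeeping shows that $S_n = \tilde Q_n^T S_0 \tilde Q_n$, since each step $S_{k+1} = Q_k^T S_k Q_k$ is an orthogonal conjugation and the conjugations compose to conjugation by $\tilde Q_n$. Substituting into the flow formula closes the loop: $\phi_{\ln}(n, S_0) = [S_0^n]_Q^T S_0 [S_0^n]_Q = \tilde Q_n^T S_0 \tilde Q_n = S_n$. I expect the only delicate step to be the product-factorization identity for $S_0^n$; the rest is a direct consequence of the formula $\psi(g,M) = [g(M)]_Q^T M [g(M)]_Q$ and the uniqueness of $QR$, with the auxiliary observation that a product of upper triangular matrices with positive diagonal is of the same form.
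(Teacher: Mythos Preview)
Your argument is correct. Both proofs begin by specializing the factorization formula of Section~\ref{factorizations} to $f(x)=\ln x$, obtaining $S(t)=[S_0^t]_Q^T\,S_0\,[S_0^t]_Q$. From there the routes diverge slightly. The paper checks only the single step $t=1$: since $\exp(\ln S_0)=S_0=Q(1)R(1)$ is the defining $QR$ factorization, $S(1)=Q(1)^T S_0 Q(1)=S_1$; the extension to all integers is then tacit, coming from the flow property $\phi_{\ln}(n,S_0)=\phi_{\ln}(1,\phi_{\ln}(n-1,S_0))$ already established in the corollary of Section~4. You instead bypass the semigroup property and prove directly, by the classical product identity $S_0^{\,n}=(Q_0\cdots Q_{n-1})(R_{n-1}\cdots R_0)$, that $[S_0^{\,n}]_Q=\tilde Q_n$ and hence $S(n)=\tilde Q_n^T S_0 \tilde Q_n=S_n$. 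This is exactly the ``known for decades'' fact the paper quotes immediately \emph{after} the theorem, so your proof is self-contained and makes that connection explicit, at the cost of the short induction; the paper's proof is terser but leans on the earlier flow machinery.
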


\begin{proof}
From Section \ref{factorizations}, the solution of the equation is given by
\[ S(t) = Q^\ast(t) S(0) Q(t), \quad  \hbox{ where } \exp t\ln S(0) = Q(t) R(t).\]
Thus, $S_0= J(0) = Q(1) R(1)$ and $S(1) = Q^\ast(1) S(0) Q(1) = J_1,$ the matrix obtained from $S_0$ from a $QR$ step.
\end{proof}

Since a Toda flow interpolates the $QR$ iteration, the convergence properties of Toda flows are also satisfied by the iteration.  The vocabulary of dynamical systems clarifies certain eigenvalue computations. The right side of
Figure \ref{polytopes} represents $\bar{{\mathcal J}_\Lambda}$,  the closure of the set of $4 \times 4$ Jacobi matrices with eigenvalues $1, 2, 3$ and $4$. The vertices correspond to diagonal matrices, which are equilibria for the Toda vector field. The vertex
$I$ is a source, $A$ is a sink and the remaining vertices are saddles with different
signatures (i.e., dimensions of the unstable manifold). The presence of saddles explains why orbits bifurcate close to some vertices (say, $H$, $E$ and $F$) in the neighborhood of which an orbit spends a long time (i.e., many $QR$ iterations), a fact that was known in the numerical literature as \textit{root disorder}.

Numerical analysts might have realized a long time ago that the $QR$ algorithm is the integer evaluation of a flow. Indeed, it has been known for decades that one can obtain directly the matrix $S_n$ of the $QR$ iteration starting with a symmetric matrix $S_0$ is given by $S_n = Q^\ast_n S_0 Q_n = R_n S_0 R^{-1}_n$ where $Q_n$ and $R_n$ are obtained from the $QR$ factorization $S_0^n = Q_n R_n$. Morally (and indeed correctly), the step
\[ S_{1/n} = Q^\ast_{1/n} S_0 Q_{1/n} = R_{1/n} S_0 R^{-1}_{1/n}, \quad S_0^{1/n} = Q_{1/n} R_{1/n} \]
is an $n$-th root of the usual $QR$ step (in the sense that $n$ such steps yield the usual $QR$ step). Now, to obtain the interpolating flow (which belongs to the Toda family, as we saw) simply compute  \[ \lim_{n \to \infty} n (S_{1/n}-S_0) = \lim_{n \to \infty} n(Q^\ast_{1/n} S_0 Q_{1/n}-S_0).   \quad (\ast) \]
Up to order $1/n$,
\[ S_0^{1/n} \simeq I + \frac{\ln S_0}{n}, \quad I + \frac{\ln S_0}{n} \simeq  (I+ \Pi_{sk} \frac{\ln S_0}{n}) (I+ \Pi_{up} \frac{\ln S_0}{n}) \]
and thus $Q_{1/n} \simeq I+ \Pi_{sk} \frac{\ln S_0}{n}$. We are now ready to take the limit $(\ast)$: the interpolating flow, for which $f(x) = \ln (x)$, is
\[ S'(t) = [ \, S, \Pi_{sk}\,  \ln S \,]. \]

\subsection{Choleski and SVD}

There is nothing special about $QR$ factorizations: other factorizations give rise to flows which are very similar to the Toda lattice.

A \textit{Cholesky} factorization $M = M_L M_U$ decomposes a matrix $M$ as a product of lower and upper triangular matrices $M_L$ and $M_U$ with the same positive diagonal. The factorization can be performed (uniquely) for matrices in $G_+$, having upper principal minors with positive determinant.
Now (\cite{DLT1}), on $G_+$ define the product $g * h = h_L g h_U$. For an appropriate coupling,
the induced dual Lie algebra is the phase space which accommodates the Cholesky iteration,
$$M_n = L_n U_n, \quad M_{n+1} = U_n L_n,  \quad n=0,1,\ldots$$
and its continuous interpolation (notice that blowups may happen).
The Lie bracket associated to this group structure
is an example of the so called \textit{$R$-matrix formalism} applied to the standard matrix Lie bracket, but this is another story.

Given a real matrix $M$, its {\it singular values} are the (nonnegative) lengths of the semi-axis of the ellipsoid obtained by applying $M$ to the unit (Euclidean) sphere. A {\it singular value decomposition} of $M$ is a product $M = Q \Sigma U$, where $Q$ and $U$ are orthogonal matrices and $\Sigma$ is a diagonal matrix having the singular values as diagonal entries.

There is an efficient algorithm, similar to  $QR$,  to compute singular values of tridiagonal matrices, which was shown by Demmel and  Kahan to have remarkable stability  properties with respect to relative errors (\cite{DK}).  In \cite{DDLT},
these properties were studied from a symplectic setup: the appropriate phase space is chosen taking into account the
specific concern with relative errors. The Jacobian $M(i,j)$ of the map sending a matrix at step $i$
to its (discrete) evolution at time $j$ is analyzed using Krein's perturbation theory for symplectic
matrices. A number of properties arise, which are responsible for the good performance of the algorithm: for large $i$, the spectrum of $M(i,j)$ is simple, lies
in the unit circle and $M(i,j)$ converges to a limit, explicitly computed. The agreement between theoretical estimates and experiments is remarkable: the rate between computed and estimated error was never larger than 8, independent of dimension.

\section{Bidiagonal coordinates} \label{bidiagonal}

Norming constants in Section \ref{LaxMoser} have a drawback: they do not cover the limit points of algorithms which converge to reduced matrices (i.e., tridiagonal matrices with some main off-diagonal entries equal to zero), like the Toda flows and $QR$ type algorithms. This problem has been circumvented by the introduction of {\it bidiagonal coordinates} (\cite{LST1}). As an extra bonus, bidiagonal coordinates provide an atlas for $\cT_\Lambda$. The construction goes as follows.

Let $\cL^1(n)$ denote the group of lower triangular matrices with unit diagonal entries. For $M \in \cM(n)$, the {\it $LU$ positive factorization}, when it exists, is $M = L U$, where $L \in \cL^1(n)$ and $U \in \cU^+(n)$. Clearly, this happens if and only if the determinants of the upper principal minors of $M$ are positive (more on the appendix).

Let $S_n$ be the symmetric group on the set $\{1,2, \ldots,n\}$.
For $\ \pi \in S_n$, define
\[ \Lambda_\pi = \diag(\lambda_{\pi(1)}, \lambda_{\pi(2)},\ldots, \lambda_{\pi(n)}). \]

There is one chart of $\cT_\Lambda$ for each permutation $\pi$. Each chart has for domain the set  $\UpLa \subset\ILa \ $ consisting of matrices $T = Q_\pi^\ast \,\Lambda_\pi Q_\pi$ for which there exists an orthogonal matrix $Q_\pi$ admitting an $LU$ positive factorization $Q_\pi = L_\pi U_\pi$. This is not as restrictive as it looks: if $T = Q_\pi^\ast \,\Lambda_\pi Q_\pi$, another spectral decomposition is given by $T = Q_\pi^\ast E \,\Lambda_\pi E Q_\pi$, where $E$ is a diagonal matrix with entries equal to $\pm 1$ along the diagonal: one may use $E$ to force the positivity of the determinants of the principal minors of $E Q_\pi$, provided that the corresponding determinants of $ Q_\pi$ are nonzero. Notice that the request that $Q_\pi$ admits an $LU$ positive factorization gives rise to a \emph{unique} spectral decomposition $T = Q_\pi^\ast \,\Lambda_\pi Q_\pi$.
We then have
\[ T = Q_\pi^\ast \,\Lambda_\pi Q_\pi = (L_\pi U_\pi)^{-1} \,\Lambda_\pi L_\pi U_\pi =  U_\pi^{-1} B_\pi  U_\pi \]
where
\[ B_\pi = L_\pi^{-1} \,\Lambda_\pi L_\pi  = U_\pi T U_\pi^{-1}. \]

\begin{theorem} \label{beta}
The matrix $B_\pi$ is lower bidiagonal with diagonal $\Lambda_\pi$.
The principal off-diagonal entries $\beta^\pi_k = (B_\pi)_{k+1,k}$
define a diffeomorphism $\psi_\pi: \UpLa \to \RR^{n-1}$.
Each  domain $\UpLa$ is an open, dense set $\UpLa$, containing one diagonal matrix. The charts $\{ \psi_\pi: \UpLa \to \RR^{n-1}, \pi \in S_n\}$ form an atlas for $\cT_\Lambda$.
The signs of $\beta^\pi_k$ and $T_{k+1,k}$ are equal and their quotient goes to one, when one of them goes to zero.
\end{theorem}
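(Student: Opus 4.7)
The plan hinges on the two representations $B_\pi = L_\pi^{-1}\Lambda_\pi L_\pi = U_\pi T U_\pi^{-1}$. The first, conjugation of a diagonal matrix by a unit lower-triangular one, shows $B_\pi$ is lower triangular with diagonal $\Lambda_\pi$. The second, upper-triangular conjugation of the tridiagonal $T$, cannot create nonzero entries more than one step below the diagonal, so $B_\pi$ is upper Hessenberg. Intersecting the two forms gives that $B_\pi$ is lower bidiagonal with diagonal $\Lambda_\pi$, handling the first sentence of the theorem.

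For the diffeomorphism claim, I would build an explicit inverse of $\psi_\pi$. Given $\beta\in\RR^{n-1}$, form the lower bidiagonal $B_\pi$ with diagonal $\Lambda_\pi$ and subdiagonal $\beta$; since $\Lambda_\pi$ has simple spectrum, the commutation equation $\Lambda_\pi L_\pi = L_\pi B_\pi$ has a unique solution $L_\pi \in \cL^1(n)$, computed column by column as a rational function of $\beta$ and the eigenvalues. Next, Cholesky-factor $L_\pi^\ast L_\pi = V_\pi^\ast V_\pi$ with $V_\pi$ upper triangular and positive-diagonal, and set $U_\pi := V_\pi^{-1}$. Then $Q_\pi := L_\pi U_\pi$ is orthogonal by construction, so $T := Q_\pi^\ast \Lambda_\pi Q_\pi = U_\pi^{-1} B_\pi U_\pi$ is symmetric; moreover, upper-triangular conjugation of a lower bidiagonal matrix remains lower Hessenberg, and ``symmetric plus lower Hessenberg'' forces tridiagonality. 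All steps are smooth in $\beta$ and clearly invert $\psi_\pi$, so $\psi_\pi:\UpLa\to\RR^{n-1}$ is a diffeomorphism.

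For the atlas statements, membership in $\UpLa$ reduces to the nonvanishing of the leading principal minors of $Q_\pi$ (the diagonal signs $E$ absorbing their actual signs), a polynomial open and dense condition in $T$; and the vertex $\Lambda_\pi$ sits in $\UpLa$ because it has $Q_\pi = I$. Transition maps between overlapping charts are smooth because both the forward recursion for $L_\pi$ and the Cholesky step depend rationally (respectively, algebraically) on the matrix entries. The main obstacle is showing $\bigcup_\pi \UpLa = \cT_\Lambda$: given any $T \in \cT_\Lambda$ with spectral decomposition $T=Q^\ast\Lambda Q$, I would apply a greedy recursion to reorder the columns of $Q$—at step $k$, choose among the unused eigenvectors one whose projection onto the orthogonal complement of the previously chosen columns has nonzero $k$-th coordinate—producing a permutation $\pi$ for which every leading minor of $Q_\pi$ is nonzero; simple spectrum makes the procedure well posed, and an $E$-sign flip then realizes the $LU$ positive factorization.

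Finally, expanding $T = U_\pi^{-1} B_\pi U_\pi$ componentwise and using that $U_\pi$ is upper triangular while $B_\pi$ is lower bidiagonal leaves a single surviving term,
\[
T_{k+1,k} \;=\; \frac{(U_\pi)_{k,k}}{(U_\pi)_{k+1,k+1}}\; \beta_k^\pi ,
\]
and the positive diagonal of the Cholesky factor $U_\pi$ gives the sign agreement. For the limit, observe that the only regime in which $\beta_k^\pi\to 0$ drives $T$ to a single limit point in $\UpLa$ is the approach to the vertex $\Lambda_\pi$; there the explicit recursion for $L_\pi$ forces $L_\pi\to I$, hence $V_\pi\to I$, $U_\pi\to I$, and the multiplicative factor tends to $1$, as claimed.
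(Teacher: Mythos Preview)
Your argument for the bidiagonal shape, the explicit inverse, and the sign agreement is essentially the paper's proof. The paper phrases the inverse construction via the $QR$ factorization $L_\pi = Q_\pi R_\pi$ rather than your Cholesky factorization of $L_\pi^\ast L_\pi$, but these coincide: if $L_\pi = Q_\pi R_\pi$ then $L_\pi^\ast L_\pi = R_\pi^\ast R_\pi$, so your $V_\pi$ is the paper's $R_\pi$ and your $U_\pi = R_\pi^{-1}$. You also go well beyond the paper on the atlas claims (openness, density, covering), all of which the paper simply ``leaves to the reader.''

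There is, however, a genuine gap in your final paragraph. The theorem asserts that the quotient $T_{k+1,k}/\beta_k^\pi$ tends to $1$ when \emph{one} of these entries tends to zero, but you only treat the approach to the vertex $\Lambda_\pi$, where \emph{all} the $\beta$'s vanish. Your own formula $T_{k+1,k} = \big((U_\pi)_{k,k}/(U_\pi)_{k+1,k+1}\big)\,\beta_k^\pi$ actually shows that the literal claim fails along a general approach to a face: for $n=3$, sending $\beta_1^\pi\to 0$ while keeping $\beta_2^\pi\ne 0$ fixed gives $L_\pi \to \hbox{diag}(1)\oplus\begin{pmatrix}1&0\\ \ell&1\end{pmatrix}$ with $\ell = \beta_2^\pi/(\lambda_{\pi(3)}-\lambda_{\pi(2)})$, and a direct Cholesky computation yields $(U_\pi)_{1,1}/(U_\pi)_{2,2}\to\sqrt{1+\ell^2}\ne 1$. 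So your sentence ``the only regime in which $\beta_k^\pi\to 0$ drives $T$ to a single limit point in $\UpLa$ is the approach to the vertex'' is not a valid reduction. What your formula \emph{does} establish is that the ratio is a smooth, strictly positive function on $\UpLa$ (so $\beta_k^\pi$ and $T_{k+1,k}$ vanish together and are locally comparable), and that it equals $1$ at $\Lambda_\pi$; if that is the intended content of the last clause you are fine, but you should say so explicitly rather than argue via $L_\pi\to I$.
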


\begin{proof} From the expressions for $B_\pi$, it is simultaneously lower triangular and upper Hessenberg. So it is actually
lower bidiagonal, with the same spectrum as $\Lambda_\pi$.

To show that the chart is a diffeomorphism to $\RR^{n-1}$, consider the construction of its inverse. Build $B_\pi$ out of off-diagonal entries $\beta^\pi_j$ and (distinct) eigenvalues $\lambda_{\pi(i)}$.
Diagonalize $B_\pi = L_\pi^{-1} \,\Lambda_\pi L_\pi$ and get $Q_\pi$ out of the $QR$ factorization $L_\pi = Q_\pi R_\pi$, so that, automatically, $Q_\pi$ admits an $LU$ positive factorization, hence $Q_\pi \in \UpLa$. Finally set $T = Q_\pi^\ast \,\Lambda_\pi Q_\pi$.

Since $U \in \cU^+(n)$, the equation $B_\pi = U_\pi T U_\pi^{-1}$ gives that the signs of $\beta^\pi_k$ and $T_{k+1,k}$ are equal.
The remaining statements are left to the reader.
\end{proof}

Figure \ref{ilailah} is an example for $n=3$: here $\Lambda_\pi = \diag(7,4,5)$ and $\UpLa$ is the interior of the polygon with boundary given by the unglued edges (the glued edges belong to $\UpLa$). Figure \ref{bitorus} was obtained using such charts: the standard norming constants would distort too much the picture (and degenerate completely) at boundaries of (signed) Jacobi matrices. Once eigenvalues are fixed, the bitorus $\cT_\lambda$ lies in the intersection of a hyperplane (of matrices with the same trace as $S$) and a sphere (same sum of squared eigenvalues), and the figure is the image of a conformal projection of $\cT_\lambda$ in $\RR^2$.

Another remarkable property of bidiagonal coordinates is that their evolution under the Toda equations manages to be even simpler that the evolution of the standard inverse variables. We consider the Toda vector fields on real, symmetric, tridiagonal matrices, where the time dependence is explicit,
\[ T'(t) = [ T(t),\Pi_{sk}\,  ( f(T(t)))]\, . \eqno(T) \]

\begin{proposition} \label{evolutions}
Fix $\pi$, take $T(0) \in \UpLa$. The chart domain $\UpLa$ is invariant under equation (T). The evolution of the bidiagonal coordinates is
\[ B_\pi'(t) = [B_\pi(t), - f(\Lambda_\pi) ], \hbox{ i.e., }  (\beta^{\pi}_i)'(t)  =
   (f(\lambda_{\pi(i+1)}) - f(\lambda_{\pi(i)})) \ \beta^{\pi}_i(t) \ . \]
  \end{proposition}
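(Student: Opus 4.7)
The plan is to exploit Symes's explicit solution by factorization from Section \ref{factorizations}. I would start from the spectral decomposition $T(0) = Q_\pi(0)^\ast \Lambda_\pi Q_\pi(0)$ with $Q_\pi(0) = L_\pi(0) U_\pi(0)$ in $LU$ positive form, and try to identify the $LU$ positive factors of the corresponding orthogonal diagonalizer $Q_\pi(t)$ of $T(t)$. Once $L_\pi(t)$ is known explicitly, both the invariance of $\UpLa$ and the ODE for $B_\pi$ should fall out by direct computation.

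The key step is the following. Since $f(T(0)) = Q_\pi(0)^\ast f(\Lambda_\pi) Q_\pi(0)$, Symes's factorization $\exp(t f(T(0))) = Q(t) R(t)$ can be rewritten, after multiplying on the left by $Q_\pi(0)$, as
\[ Q_\pi(0) Q(t) \, R(t) \;=\; \exp(t f(\Lambda_\pi)) \, L_\pi(0) \, U_\pi(0). \]
Setting $Q_\pi(t) := Q_\pi(0) Q(t)$, one has $T(t) = Q_\pi(t)^\ast \Lambda_\pi Q_\pi(t)$. Since $\exp(t f(\Lambda_\pi))$ is diagonal, conjugation sends $L_\pi(0)$ to another matrix in $\cL^1(n)$, so one can commute the two factors on the right:
\[ \exp(t f(\Lambda_\pi)) L_\pi(0) \;=\; \tilde L(t) \, \exp(t f(\Lambda_\pi)), \qquad \tilde L(t) := \exp(t f(\Lambda_\pi)) L_\pi(0) \exp(-t f(\Lambda_\pi)) \in \cL^1(n). \]
Substituting back yields $Q_\pi(t) = \tilde L(t) \cdot \bigl( \exp(t f(\Lambda_\pi)) U_\pi(0) R(t)^{-1} \bigr)$, and the parenthesized factor is upper triangular with positive diagonal (it is a product of three such matrices, using that $R(t) \in \cU^+(n)$). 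By the uniqueness of the $LU$ positive factorization, $L_\pi(t) = \tilde L(t)$; in particular $Q_\pi(t)$ admits such a factorization, which is precisely the invariance of $\UpLa$.

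Having identified $L_\pi(t) = \exp(t f(\Lambda_\pi)) L_\pi(0) \exp(-t f(\Lambda_\pi))$, the crucial observation is that $\Lambda_\pi$ and $f(\Lambda_\pi)$ commute (both are diagonal), so the two conjugating factors collapse:
\[ B_\pi(t) \;=\; L_\pi(t)^{-1} \Lambda_\pi L_\pi(t) \;=\; \exp(t f(\Lambda_\pi)) \, B_\pi(0) \, \exp(-t f(\Lambda_\pi)). \]
Differentiating at $t$ gives $B_\pi'(t) = [f(\Lambda_\pi), B_\pi(t)] = [B_\pi(t), -f(\Lambda_\pi)]$, and reading off the $(i+1,i)$-entry of this commutator, using that $B_\pi$ is lower bidiagonal with $(B_\pi)_{ii} = \lambda_{\pi(i)}$, produces the scalar equation $(\beta^\pi_i)'(t) = (f(\lambda_{\pi(i+1)}) - f(\lambda_{\pi(i)})) \beta^\pi_i(t)$. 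The only delicate point is the uniqueness argument that pins down $L_\pi(t)$ from the factored product above; once this is in hand, everything else is bookkeeping with commuting diagonal matrices.
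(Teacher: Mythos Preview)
Your argument is correct. For the invariance of $\UpLa$ you reproduce the paper's computation $\tilde Q(t) = \exp(t f(\Lambda_\pi))\, Q_\pi(0)\, R(t)^{-1}$ and then go one step further: by inserting $Q_\pi(0) = L_\pi(0) U_\pi(0)$ and commuting the diagonal factor past $L_\pi(0)$ you exhibit the $LU$ positive factorization of $Q_\pi(t)$ explicitly, thereby identifying $L_\pi(t) = \exp(t f(\Lambda_\pi)) L_\pi(0) \exp(-t f(\Lambda_\pi))$.

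This extra step is where your route diverges from the paper's. Once $L_\pi(t)$ is known in closed form, the commutativity of $\Lambda_\pi$ with $f(\Lambda_\pi)$ immediately gives the explicit solution $B_\pi(t) = \exp(t f(\Lambda_\pi))\, B_\pi(0)\, \exp(-t f(\Lambda_\pi))$, and differentiation is trivial. The paper instead differentiates $B_\pi(t) = L^{-1}(t)\Lambda_\pi L(t)$ to obtain $B' = [B, L^{-1}L']$, then computes $L^{-1}L'$ through a chain of projection identities ($\Pi_{sl}$, $\Pi_{sk}$, $\Pi_{up}$) using $\tilde Q^\ast \tilde Q' = \Pi_{sk} f(T)$, eventually arriving at $L^{-1}L' = \Pi_{sl} f(B)$ and hence $B' = [B, -f(\Lambda_\pi)]$. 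Your approach is shorter and yields the integrated solution as a bonus; the paper's approach is more in the Lax-pair spirit and does not require recognising the explicit conjugation formula for $L_\pi(t)$.
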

\begin{proof}  We first prove that $(T)$ leaves $\UpLa$ invariant. Take $T(0) \in \UpLa$:  omitting the permutation $\pi$, we have $T(0) = Q^\ast_0 \, \Lambda \,Q_0$, where $Q_0$ has an $LU$ positive decomposition. Solve $(T)$  as in Section  \ref{factorizations}:
for $\exp(t f(T(0))) = Q(t) \,R(t)$,
\[ T(t) = Q(t)^\ast \, T(0)\, Q(t) = Q(t)^\ast \, Q^\ast_0 \,\Lambda \, Q_0 \, Q(t)  \]
We have to prove that $\tilde{Q}(t)= Q_0 \, Q(t)$ admits an $LU$ positive factorization:
\[ \tilde{Q}(t)=  Q_0 \, \exp(t f(T(0)))\, R^{-1}(t)
=  Q_0 \, \exp(t f(T(0)))\,Q_0^\ast \, Q_0 \, R^{-1}(t) \]
\[ = Q_0 \, \exp(t f(Q^\ast_0 \, \Lambda \,Q_0))\,Q_0^\ast \, Q_0 \, R^{-1}(t) = \exp(t f(\Lambda))\, Q_0 \, R^{-1}(t).\]
The  upper principal minors of $\tilde{Q}(t)$ and $Q_0$ have the same signs, since  $\exp(t f(\Lambda))$ is a positive diagonal matrix and
$R^{-1}(t) \in \cU^+(n)$.

We now consider bidiagonal coordinates. Clearly
\[ T(t) = Q^\ast(t) Q^\ast_0 \,\Lambda\, Q_0 Q(t) = \tilde{Q}^\ast(t) \,\Lambda\, \tilde{Q}(t), \quad \tilde{Q}^\ast(t) \, \tilde{Q}'(t)= Q^\ast(t) Q'(t).\]

Now, $B(t) = L^{-1}(t) \,\Lambda\, L(t)$ and we obtain $B'(t) = [ B(t), L^{-1}(t) \, L'(t)]$ by now familiar computations. The matrix $L$ is obtained by the $LU$ positive factorization $\tilde{Q}(t) = L(t) \, U(t)$, so that, dropping the time dependence, $\tilde{Q}' = L'U + LU'$ yields
\[ L^{-1} \tilde{Q}' U^{-1} = L^{-1} L' + U'U^{-1}. \]
Consider the split $M = \Pi_{sl}\, M + \Pi_{u} M$ of a matrix $M$ into strictly lower and upper triangular parts. Since $L$ has a diagonal of ones, $L^{-1} L'$ is strictly lower triangular.
The solution by factorization in Section \ref{factorizations} gives
$ \tilde{Q}^\ast \, \tilde{Q}' = \Pi_{sk}\,  ( f(T))$, so that
\[ L^{-1} L' = \Pi_{sl}\,  L^{-1} \tilde{Q}' U^{-1} = \Pi_{sl}\,  U \big( \Pi_{sk}\,   f(T) \big) \, U^{-1}. \]
Now, the matrices $M$ and $\Pi_{sk}\,  M$ have the same strictly lower triangular part and $U$ is upper triangular, so
\[  L^{-1} L'  = \Pi_{sl}\,  U   f(T)  U^{-1} = \Pi_{sl}\,    f(UTU^{-1})  = \Pi_{sl}\,    f(B) . \]
Adding up,
\[ B' = [ B, L^{-1} L'] = [B, \Pi_{sl}\,    f(B)] = [B,  f(B) - f(\Lambda)] =  [B, - f(\Lambda)] \]
\end{proof}

The proof requires interpretation in the case $f(x) = \ln x$, which is of relevance for $QR$  interpolation: we need $\exp(t f(\Lambda))$ to be a positive diagonal matrix --- in this case, the diagonal entries must be equal to $|\lambda_k|$.

Bidiagonal coordinates are especially convenient to study asymptotic behavior of Toda flows. As an application, the reader may find in \cite{LST1} a rather natural computation of their scattering map, described in Section \ref{LaxMoser}, by filling up the following  inevitable outline. Recall from Figure 1 that Toda flows starting from Jacobi matrices have for $\omega$ and $\alpha$ limits the diagonal matrices associated to
$ \pi_\omega (i) = i$ and $\pi_\alpha (i) = n-i+1$.
On each chart, the Toda evolution in bidiagonal coordinates is simple. The change of charts required to keep track of both extremes of an orbit is equally simple.

An appropriate extension of this formalism provides charts on
isospectral manifolds of real and complex matrices with given {\it profile}, a natural extension of the concept of tridiagonality --- a text is under preparation (\cite{ST}).

\subsection{$QR$ steps with shifts}

Numerical analysts have  ways to speed up the convergence of $QR$. The original $QR$ step is just the choice $f(x) = \ln x$ in the family
\[ S_n \mapsto  S_{n+1} = [ \exp f(S_0) ]_Q^\ast \, S_0 \,[ \exp f(S_0) ]_Q = [ \exp f(S_0) ]_R \,S_0 \, [\exp f(S_0) ]_R^{-1}.\]
Bidiagonal coordinates may be used to indicate interesting alternatives:  interpret a step as a time 1 map for a differential equation and integrate the trivial flow which describe evolutions in  Proposition \ref{evolutions}. For $f(x) = \ln g(x)$, taking into account the caveat after its proof, the change of the bidiagonal coordinates of a matrix $T \in \UpLa$   under a step is
\[ (\beta^\pi_1, \ldots, \beta^\pi_{n-1}) \mapsto
\left( \left | \frac{g(\lambda_{\pi(2)})}{g(\lambda_{\pi(1)})} \right|
\beta^\pi_1, \ldots,
\left | \frac{g(\lambda_{\pi(n)})}{g(\lambda_{\pi(n-1)})} \right|
\beta^\pi_{n-1} \right). \]
We are interested in functions $g$ for which some $\beta^\pi_{k}$ becomes small --- in this case, the corresponding matrix essentially decouples in two smaller tridiagonal matrices, for which the computation of eigenvalues is simpler. Taking into account the denominators in the formula, a natural possibility is a function $g$ which equals zero at $\lambda_{\pi(n)}$ but does not vanish at the other eigenvalues. Keep in mind that, unfortunately, we do not know the eigenvalues. In particular, it is especially hard to obtain such a $g$ which would reduce drastically a centrally located $\beta^\pi_{k}$ (i.e., $k \sim n/2$).

Notice that, from Theorem \ref{beta}, the matrix $T$ is Jacobi if and only if its bidiagonal coordinates $\beta^\pi_k$ are positive, and the sign is preserved under general $QR$ steps. On the other hand, removing the absolute values in the formula shows that one can replace iterations lying within Jacobi matrices to iterations on tridiagonal matrices without changing their asymptotic properties: only the off-diagonal entries eventually have different signs. The new iteration is {\it smooth} on the isospectral manifold $\cT_\Lambda$, and convergence rates may be obtained using Taylor expansions.

Typically, along an iteration of an algorithm searching for eigenvalues, one has good approximations $s$ for one of them: a natural choice is $g(x) = x - s$, the iteration with {\it shift} $s$. The bidiagonal coordinates, in this case, change as follows:
\[
(\beta^\pi_1, \ldots, \beta^\pi_{n-1}) \mapsto
\left( \left|\frac{\lambda^\pi_2 - s}{\lambda^\pi_1 - s}\right| \beta^\pi_1,
\ldots,
\left|\frac{\lambda^\pi_n - s}{\lambda^\pi_{n-1} - s}\right| \beta^\pi_{n-1}
\right). \]

Numerical analysts frequently do not wait for a good approximation $s$. There are different shift strategies (\cite{P}) --- we consider the {\it  Rayleigh quotient shift} for which $s = T_{n,n}$,
and the {\it Wilkinson shift}: compute the eigenvalues of $\hat{T}$, the bottom $2 \times 2$ principal minor of $T$, and take for $s$ the one which is closer to $T_{n,n}$.

Under the Rayleigh shift, once iteration approaches convergence, a simple Taylor expansion shows that the bottom entry $b = \beta^\pi_{n-1}$
converges cubically to zero, in the sense that $b^{m+1} = O((b^m)^3)$ for bottom entries at consecutive iterations. Once $b$ is small enough, the matrix undergoes {\it deflation}: $T_{n,n}$ is declared a good approximation of an eigenvalue and the algorithm proceeds with the top $(n-1) \times (n-1)$ block. There is one catch however: for some initial conditions, the Rayleigh shift gives rise to periodic orbits.

This does not happen for the Wilkinson shift. The dynamics in this case is richer: we describe the results but may only indicate \cite{LST2} and \cite{LST3} for proofs.

\begin{theorem} For a generic initial condition, a Wilkinson iteration leads to cubic convergence of $b = \beta^\pi_{n-1} \sim T_{n,n-1}$. If the original matrix $T$ has no three eigenvalues in arithmetic progression, this is always the case. Otherwise, there may be a Cantor-like set of initial conditions for which iteration is quadratic.
\end{theorem}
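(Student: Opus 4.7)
The plan is to treat each Wilkinson step as a smooth map on the isospectral manifold $\cT_\Lambda$ (using the bidiagonal atlas from Theorem \ref{beta}) and analyze it near the stratum $\{b=0\}$, where $b = \beta^\pi_{n-1}$. The explicit change-of-coordinate formula
\[
  b \ \mapsto\  \left|\frac{\lambda^\pi_n - s}{\lambda^\pi_{n-1} - s}\right| b
\]
reduces everything to estimating $|\lambda^\pi_n - s|$, where $s$ is the eigenvalue of the bottom $2\times 2$ block $\hat T$ closest to $T_{n,n}$. As a first step, I would write $\hat T = \mathrm{diag}(T_{n-1,n-1}, T_{n,n}) + b\,(E_{n,n-1}+E_{n-1,n})$ and, by standard perturbation theory for $2\times 2$ symmetric matrices, expand its eigenvalues as $T_{n,n} - \frac{b^2}{T_{n,n}-T_{n-1,n-1}} + O(b^4)$ and $T_{n-1,n-1} + \frac{b^2}{T_{n,n}-T_{n-1,n-1}} + O(b^4)$, provided the gap $T_{n,n}-T_{n-1,n-1}$ stays away from $0$.

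Next, I would translate convergence on $\cT_\Lambda$ back to bidiagonal coordinates: as $b\to 0$, $T_{n,n}\to\lambda^\pi_n$ and $T_{n-1,n-1}\to\lambda^\pi_{n-1}$ with errors of order $b^2$ (since when $b=0$ the last row/column decouples exactly). The Wilkinson shift then selects $s$ on the $T_{n,n}$ side, so that $|s-\lambda^\pi_n| = O(b^2)$ while $|\lambda^\pi_{n-1}-s|$ remains bounded away from zero. Substituting into the update rule gives $b_{new} = O(b^3)$, which is the claimed cubic convergence. To upgrade this local statement to a global one, I would show that the Wilkinson map, viewed in bidiagonal charts, is smooth on each open cell, that its non-trivial fixed points (diagonal matrices with signed permutations) are hyperbolic, and that forward orbits eventually enter the basin of a stable fixed point on a dense open set—this is the genericity statement.

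The decisive case is when no three eigenvalues are in arithmetic progression. The only way the above analysis can fail is if the Wilkinson shift chronically picks the \emph{wrong} eigenvalue of $\hat T$, i.e.\ the one converging to $\lambda^\pi_{n-1}$ rather than $\lambda^\pi_n$. That choice is governed by the sign of $T_{n,n}-\tfrac12(\mu_1+\mu_2)$, where $\mu_1,\mu_2$ are the eigenvalues of $\hat T$; a careful expansion shows that any oscillation of this sign forces a relation of the form $\lambda^\pi_{n} + \lambda^\pi_{n-2} = 2\lambda^\pi_{n-1}$ in the limit—precisely an arithmetic progression among three eigenvalues. Ruling this out makes the shift consistently choose $\lambda^\pi_n$, and the previous paragraph's bound delivers cubic convergence on all of $\cT_\Lambda$.

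The hardest part is the final statement about the Cantor-like set in the AP case. Here I would fix a permutation and isolate the two-dimensional "ambiguity region" where the Wilkinson decision between $\mu_1$ and $\mu_2$ flips. On a neighborhood of the AP stratum, one can conjugate the Wilkinson map (restricted to an invariant slow manifold governing the coordinates $\beta^\pi_{n-2}, \beta^\pi_{n-1}$) to a skew product over an expanding one-dimensional map that shuffles the two shift choices. Standard symbolic-dynamics constructions then produce a hyperbolic invariant Cantor set on which the dynamics is conjugate to a subshift, and on which $b_{new} = O(b^2)$ rather than $O(b^3)$ because the "wrong" shift is chosen infinitely often. Making this conjugacy precise—in particular, verifying the expanding and transversality conditions and checking that the exceptional set has empty interior yet positive Hausdorff dimension—is the most delicate and technical portion, and is where I would defer to \cite{LST2,LST3} for the full argument.
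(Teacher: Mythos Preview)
The paper does not actually prove this theorem: immediately before the statement it says ``we describe the results but may only indicate \cite{LST2} and \cite{LST3} for proofs,'' and the single paragraph that follows is a heuristic, not an argument. So there is no proof in the paper to compare your proposal against. Your outline is in fact considerably more detailed than anything the paper offers, and its skeleton --- use the bidiagonal update formula to reduce everything to estimating $|\lambda^\pi_n - s|$, get $|s-\lambda^\pi_n|=O(b^2)$ from perturbation of the $2\times 2$ block, hence $b_{\mathrm{new}}=O(b^3)$; then trace the exceptional behavior to the discontinuity of the Wilkinson selection and build the Cantor set by symbolic dynamics --- is exactly the strategy of \cite{LST2,LST3} and is consistent with the paper's one-line heuristic that the Cantor set consists of orbits that remain forever on the selection discontinuity.

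Two small points worth tightening. First, a sign slip: the eigenvalue of $\hat T$ near $T_{n,n}$ is $T_{n,n}+\dfrac{b^2}{T_{n,n}-T_{n-1,n-1}}+O(b^4)$, not with a minus; this does not affect the order estimate. Second, your derivation of the arithmetic-progression condition is underspecified: the quantity $T_{n,n}-\tfrac12(\mu_1+\mu_2)$ equals $\tfrac12(T_{n,n}-T_{n-1,n-1})$ by the trace, so the ambiguity criterion you wrote does not by itself introduce a third eigenvalue. The appearance of $\lambda^\pi_{n-2}$ (and hence the AP relation) comes only after one also expands $T_{n-1,n-1}$ in terms of $\beta^\pi_{n-2}$ and tracks the coupled dynamics of $(\beta^\pi_{n-2},\beta^\pi_{n-1})$ near the fixed point; that coupling is where the genuine work in \cite{LST2,LST3} lies, and your sketch correctly flags it as the delicate part.
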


In a nutshell, this peculiar behavior is caused by the discontinuities in the definition of $s$, at points where $T_{n,n}$ is equidistant from the eigenvalues of the block $\hat T$. The Cantor-like set in the statement of the theorem consists of matrices all of whose iterations lie in this situation. Typically, this never happens or happens just at few steps and convergence is cubic.

\section{Lax pairs beyond Toda}
Forty years of contributions from
a large community greatly increased our understanding of the Toda flow, which sometimes is undistinguishable from the more general Lax pair evolution. Mutations are abundant: for a beautiful starting point about different representations of the Toda flow (and other geometric issues which may studied through them), the reader should refer to \cite{KS}.

In this text, we emphasized examples over theory. In order to indicate the versatility of the concepts which have been presented, we close with a final example.

\subsection{The billiard on an ellipsoid}

Moser and Veselov \cite{MV} introduced a very interesting formulation of the  billiard on an ellipsoid $E = \{ x : (x, C^{-2}x) \leq 1\}$. We fix notation: a ball moves along a direction $y_0$ (a unit vector), hits the boundary $\partial E$ at $x_0$, and
leaves in direction $y_1$ until it hits $x_1$. They presented the \textit{billiard map} $\Psi(x_0,y_0) = (x_1,y_1)$
in the following fashion. Write the (matrix) polynomial
\[ L_0(\lambda) = y_0 \otimes y_0 + \lambda x_0 \wedge y_0 - \lambda^2 C^2, \]
factor in linear terms,
$$L_0(\lambda) = (\lambda C + y_0 \otimes \xi_0)(-\lambda C + \xi_0 \otimes y_0), \quad \xi_0 = C^{-1} x_0,\, ||\xi_0||=1, $$
intertwine factors as in the $QR$ algorithm and factor again,
$$L_0'(\lambda) =(-\lambda C + \xi_0 \otimes y_0)(\lambda C + y_0 \otimes \xi_0)
=(\lambda C + y_0' \otimes \xi_0')(-\lambda C + \xi_0' \otimes y_0'),$$
with $y_0'= \psi_0, ||\xi_0'||=1$. Set $\phi(x_0,y_0) = (-Cy_1,C^{-1}x_0)$: then
$\Psi = - \phi^2$.

In \cite{DLT2}, the analogy of the Moser-Veselov formalism to the $QR$ iteration
--- better still, to the Cholesky iteration \cite{DLT1} --- is taken literally. The relevant group now is $G_+$, consisting of loops
$\gamma: i\RR \to GL(n,\CC)$ which are smooth at $\infty$, contract to the identity loop,
are positive diagonal at $\infty$,  satisfy the reality condition
 $\gamma(\bar{\lambda}) = \overline{{\gamma(\lambda)}}$ and
admit a (unique) Riemann-Hilbert factorization $\gamma = \gamma_L \gamma_R$,
where $\gamma_L$ and $\gamma_R$ have analytic continuations to the left and right side of the imaginary axis
respectively, and same diagonal values at $\infty$.
The group operation is
$\gamma * \delta = \delta_L \gamma \delta_R$.
Take an initial data for the billiard evolution and construct
the rational matrix function $A_0(\lambda) = L_0(\lambda)/(1 - \lambda^2)$.

One has to circumvent a technical difficulty:  $A_0(0)$ is not invertible --- it is a rank one matrix!
Still, as in the finite dimensional cases,
the evolution is interpolated by a differential equation. This
time, the solution formula by factorization involves a {\it Riemann-Hilbert problem}, with a mild singularity at $\lambda=0$.

\section*{Appendix 1: The $QR$ and $LU$ factorizations}

Let $M$ an invertible, real matrix. Then there is a unique $Q \in SO(n)$ and $R \in \Uu(n)$ for which $M = QR$, the {\sl $QR$ factorization of $M$}. Indeed, let $e_i, i=1,\ldots,n$ be the canonical vectors: the reader should have no difficulty in showing that the subspaces generated by $Me_i$ and $Q e_i$, for $i=1,\ldots,k$ ($k$ arbitrary) should be the same, if such a decomposition exists. Since the columns of $Q$ are orthonormal, they must be  vectors obtained by applying the Gram-Schmidt orthogonalization procedure to the columns of $M$ sequentially. The entries of $R$ are the coefficients used in the representation of $Me_i$ in terms of the columns $Qe_1, \ldots, Qe_i$. Since $M$ is invertible, the process is feasible, and appropriate normalizations give rise to positive diagonal entries of $R$.

Similarly, the (unique) $LU$  decomposition of $M$ is $M = LU$, where $L \in \cL^1 (n)$ and $U \in \cU^+(n)$. This decomposition exists if and only if the principal diagonal minors of $M$ (i.e., the determinants of the $k \times k$ submatrices $M_k$ with entries in the intersection of the first $k$ rows and columns, $k=1,\ldots,n$) are strictly positive. In a nutshell, from $M_k = L_k U_k$, it is clear that an inductive construction is at hand: the details are left to the reader or in standard texts (\cite{De}, \cite{TB}).

\medskip
Received June 2013; revised October 2013.
\medskip

\end{document}